\renewcommand{\baselinestretch}{\baselinestretch}
\renewcommand{\baselinestretch}{1.1}
\numberwithin{equation}{section}
\newtheorem{thm}{Theorem}[section]
\newtheorem{lem}[thm]{Lemma}
\newtheorem{cor}[thm]{Corollary}
\newtheorem{prop}[thm]{Proposition}
\newtheorem{defn}[thm]{Definition}
\newcommand{\gen}{\text{gen}}
\newcommand{\ord}{\textnormal{ord}}
\newcommand{\z}{{\mathbb Z}}
\newcommand{\rank}{\textnormal{rank}}
\newcommand{\bx}{\bm x}
\newcommand{\by}{\bm y}
\newcommand{\bz}{\bm z}
\newcommand{\be}{\bm e}
\newcommand{\bff}{\bm f}
\newcommand{\bv}{\bm v}
\newcommand{\bu}{\bm u}
\newcommand{\bw}{\bm w}
\newcommand{\0}{\bm 0}
\newcommand{\fs}{\mathfrak s}
\newcommand{\ring}{\mathfrak o}
\newcommand{\ideal}{\mathfrak a}
\newcommand{\vol}{\mathfrak v}
\newcommand{\norm}{\textnormal{Nr}}
\newcommand{\trace}{\textnormal{Tr}}
\newcommand{\p}{\mathfrak p}
\newcommand{\qq}{\mathfrak q}
\newcommand{\pr}{\textnormal{pr}}
\begin{document}

\author{Wai Kiu Chan}
\address{Department of Mathematics and Computer Science, Wesleyan University, Middletown CT, 06459, USA}
\email{wkchan@wesleyan.edu}

\author{Byeong-Kweon Oh}
\address{Department of Mathematical Sciences and Research Institute of Mathematics,
Seoul National University,
 Seoul 151-747, Korea}
\email{bkoh@snu.ac.kr}
\thanks{This work of the second author was supported by the National Research Foundation of Korea (NRF-2019R1A2C1086347) and (NRF-2020R1A5A1016126).}

\subjclass[2010]{11E12, 11E25}

\keywords{}

\title[Recover a quadratic form by representations]{Can we recover an integral quadratic form by representing  all its subforms?}

\begin{abstract}
Let $\ring$ be the ring of integers of a number field.   If $f$ is a quadratic form over $\ring$ and $g$ is another quadratic form over $\ring$ which represents all proper subforms of $f$, does $g$ represent $f$?  We show that if $g$ is indefinite, then $g$ indeed represents $f$.  However, when $f$ is positive definite and indecomposable, then there exists a $g$ which represents all proper subforms of $f$ but not $f$ itself.    Along the way we give a new characterization of positive definite decomposable quadratic forms over $\ring$ and a number-field generalization of the finiteness theorem of representations of quadratic forms by quadratic forms over $\z$ \cite[Theorem 3.3]{kko} which asserts that given any infinite set $\mathscr S$ of classes of positive definite integral quadratic forms over $\ring$ of a fixed rank, there exists a finite subset $\mathscr S_0$ of $\mathscr S$ with the property that a positive definite quadratic form over $\ring$ represents all classes in $\mathscr S$ if and only if it represents all classes in $\mathscr S_0$.
\end{abstract}

\maketitle

\section{Introduction}

Let $f$ be an integral quadratic form.  A quadratic form $f'$ is said to be represented by $f$, or $f'$ is a subform of $f$, if it is obtained from $f$ by a linear change of variables over the integers.  If the change of variables is invertible over the integers, then $f$ and $f'$ are said to be equivalent.  A proper subform of $f$ is a subform of $f$ which is not equivalent to $f$.  Along with their investigation of the minimal universality criterion sets for positive definite integral quadratic forms, the authors of \cite{klo} consider the following natural question:
\begin{quote}
$(*)$ \quad Let $f$ be a positive definite integral quadratic form.  If a positive definite quadratic form $g$ represents all the proper subforms of $f$, does $g$ represent $f$?
\end{quote}

At first sight this question should have an affirmative answer since a lot of the proper subforms of $f$ are almost equal to $f$.  However, in \cite[Section 4]{klo} it was shown that if $f$ is a positive definite indecomposable quadratic form in $\leq 4$ variables, then there is always a positive definite quadratic form $g$ which represents all proper subforms of $f$ but not $f$ itself.  This suggests that indecomposability plays a key role to the answer of the question $(*)$.

In this paper, we consider $(*)$ from a broader perspective by allowing the quadratic forms to be indefinite.  Our results show a great contrast between the definite and the indefinite cases.  In the indefinite case, we show that if $g$ is an indefinite quadratic form which represents all the proper subforms of $f$, then $g$ must represent $f$ (Theorem \ref{indefinitethm}).  However, in the definite case, we prove that for any positive definite indecomposable quadratic form $f$, regardless of the number of variables, there must be a positive definite quadratic which represents all the proper subforms of $f$ but not $f$ itself (Theorem \ref{main}).   In the terminology of \cite[Definition 3.1]{klo}, this means that a positive definite indecomposable quadratic form is not {\em recoverable}.

Our algebraic and arithmetic approach is flexible enough to let us expand the scope of the discussion to include quadratic forms over the ring of integers of a number field.  Furthermore, it will be more convenient to adopt the geometric language of quadratic spaces and lattices.  We will recall in the next section some of the notations and terminologies necessary in our discussion.  The readers can find a thorough treatment of quadratic spaces and lattices and any unexplained terminologies in \cite{om}.

For the indefinite case, the main tool is the powerful theory of spinor exceptions.  The interested readers are referred to \cite{hsx} and \cite{x} for a more recent account of this important theory concerning representations of indefinite lattices.  As for the definite case, there are two main ingredients both of which could be of independent interest to the readers.  The first one is the idea of a {\em nonsplit volume-minimizing flag} in a positive definite indecomposable lattice (Definition \ref{flag}).  This leads to a new characterization of positive definite decomposable $\ring$-lattices (Theorem \ref{decomposable}).  The second one is the number-field generalization of the finiteness theorem of representation of quadratic forms by quadratic forms in \cite[Theorem 3.3]{kko} (Theorem \ref{finite}).  An interesting consequence is the existence of a finite universality  criterion set for positive definite quadratic forms over a totally real number field.

\section{Preliminaries}

Let $\ring$ be a Dedekind domain of characteristic 0, and $F$ be its field of fractions.  A quadratic space over $F$ is a finite dimensional vector space over $F$ which is equipped with a quadratic map.  For the sake of convenience, the quadratic map on any quadratic space will be denoted by $Q$.  Associated to $Q$ is the bilinear map $B$  which satisfies the identity $2B(\bx, \by) = Q(\bx + \by)  - Q(\bx) - Q(\by)$ for all vectors $\bx$ and $\by$ in the underlying quadratic space.  If $\bx_1, \ldots, \bx_\ell$ are vectors in a quadratic space, the Gram matrix with respect to the ordered set $\{\bx_1, \ldots, \bx_\ell\}$ is the $\ell\times \ell$ matrix $(B(\bx_i, \bx_j))$.

An $\ring$-lattice is a finitely generated $\ring$-module $L$ in a quadratic space $V$ over $F$.  If $FL = V$, we say that $L$ is an $\ring$-lattice {\em on} $V$.  The {\em dual} of $L$, denoted $L^\#$, is the set $\{\bx \in FL : B(\bx, \bv) \in \ring \mbox{ for all } \bv \in L\}$ which is also an $\ring$-lattice on $FL$.  By the {\em scale} $\mathfrak s(L)$ of $L$ we mean the $\ring$-module generated by the subset $B(L, L)$ of $F$, and the {\em norm} $\mathfrak n(L)$ of $L$ is the $\ring$-module generated by the subset $Q(L)$ of $F$.  By their definitions, we see that $\mathfrak s(L) \supseteq \mathfrak n(L) \supseteq 2\mathfrak s(L)$.

We say that $L$ is an integral $\ring$-lattice if $\mathfrak s(L) \subseteq \ring$.  Let $\ideal$ be a fractional ideal of $\ring$.  Then, $L$ is called an $\ideal$-maximal $\ring$-lattice on $V$ if for every $\ring$-lattice $K$ on $V$ such that $L \subseteq K$ and $\mathfrak n(K) \subseteq \ideal$, then $L = K$.

When $L$ and $K$ are $\ring$-lattices in a quadratic space, $L + K$ is the their sum as $\ring$-modules which is an $\ring$-lattice.  If $L \cap K = \0$, then their sum becomes a direct sum which is denoted by $L \oplus K$.   Further, if $L \cap K = \0$ and $B(L, K) = \{0\}$, then their sum becomes an orthogonal sum and will be denoted by $L \perp K$.

As an module over $\ring$, $L$ needs not be free.  But there are linearly independent vectors $\bx_1, \ldots, \bx_\ell$ in $L$ and fractional ideals $\ideal_1, \ldots, \ideal_\ell$ such that $L = \ideal_1\bx_1 \oplus \cdots \oplus \ideal_\ell\bx_\ell$.  The volume of $L$, denoted $\vol(L)$, is defined as
$$\vol(L) = (\ideal_1\cdots \ideal_\ell)^2 \det(B(\bx_i, \bx_j)).$$
If $L$ is integral, then it is nondegenerate if $\vol(L) \neq 0$, and is unimodular if $\vol(L) = \ring$.  More generally, $L$ is $\ideal$-modular if $\fs(L) = \ideal$ and $\vol(L) = \ideal^\ell$, which is equivalent to the equality $\ideal L^\# = L$.  Since $(L^\#)^\# = L$, we see that $L$ is $\ideal$-modular if and only if $L^\#$ is $\ideal^{-1}$-modular.

Let $N$ be another $\ring$-lattice.  We say that $N$ is represented by $L$, denoted $N \longrightarrow L$, if there exists an $\ring$-linear map $\varphi: N \longrightarrow L$ such that $Q(\varphi(\bx)) = Q(\bx)$ for all $\bx \in N$.   Such a map $\varphi$ is called a representation from $N$ to $L$.   It is called primitive if $\varphi(N)$ is a  primitive sublattice (or a direct summand) of $L$, which is the same as $\varphi(FN) \cap L = \varphi(N)$.  An injective representation from $N$ to $L$ is called an isometry from $N$ into $L$, and $N$ and $L$ are said to be isometric if there is an isometry from $N$ onto $L$.  It is known that if $N$ is nondegenerate, then a representation $\varphi: N \longrightarrow L$ must be an isometry.

The isometry class of $L$, denoted by $[L]$, is the set of $\ring$-lattices that are isometric to $L$.  If $N$ is represented by $L$, then any lattices in $[N]$ will also be represented by any lattices in $[L]$.  Therefore, we may speak of an isometry class of lattices being represented by another isometry class of lattices.

We point out in here that when $\ring$ is the ring of integers in a number field, the set of isometry class of $\ring$-lattices is countable.  Indeed, every $\ring$-lattice of rank $n$ must be isometric to an $\ring$-lattice on $F^n$ equipped with a quadratic map which can be identified with an $n\times n$ symmetric matrix $S$ with entries in $F$.  As is indicated earlier, to specify such an $\ring$-lattice on $F^n$ all we need is an $n\times n$ matrix $A$ over $F$ whose columns are linearly independent and an ordered $n$-tuple $I$ of fractional ideals of $F$.  Since $F$ is a number field, there are countably many these $S$, $A$, and $I$.  Therefore, all together there are only countably many tuples $(S, A, I)$, and hence there are only countably many isometry classes of $\ring$-lattices of rank $n$.

Suppose that $N$ is a sublattice of $L$.  We say that $N$ splits $L$ if $N$ is an orthogonal summand of $L$.   The lattice $L$ is called decomposable if $L = N \perp M$ for some nonzero sublattices $N$ and $M$; otherwise it is indecomposable.  It is clear that every $\ring$-lattice $L$ is the orthogonal sum $L = L_1 \perp \cdots \perp L_t$ of indecomposable sublattices $L_1, \ldots, L_t$.  When $\ring$ is the ring of integers in a totally real number field and $L$ is positive definite, Eichler's unique decomposition theorem \cite{ei,kn} says that the indecomposable sublattices $L_1, \ldots, L_t$  are uniquely determined by $L$ (aside from their order).  O'Meara \cite{omd} showed that when $F$ is totally real, positive definite indecomposable $\ring$-lattices exist in all ranks, and if the class number of $F$ is 1 then there are positive definite indecomposable $\ring$-lattices of any given rank and any given discriminant that is not square-free with exactly three exceptions, all of them over $\z$.

Suppose that $\ring$ is the ring of integers in a number field $F$.  For every finite prime $\p$ of $F$, $F_\p$ and $\ring_\p$ denote the completion of $F$ and $\ring$ at $\p$, respectively, and $L_\p$ denotes the completion of $L$ at $\p$ which is an $\ring_\p$-lattice.   The genus of an $\ring$-lattice $L$, denoted $\gen(L)$, is the set of $\ring$-lattices $K$ on $FL$ such that $K_\p$ is isometric to $L_\p$ for all finite primes $\p$.  An $\ring$-lattice $N$ is said to be represented by $\gen(L)$ if $N$ is represented by some $\ring$-lattice in $\gen(L)$.  This is the same as saying that $N_\p$ is represented by $L_\p$ for all primes $\p$.

\section{The Indefinite Case}

In this section, $F$ is a number field and $\ring$ is the ring of integers in $F$.  All $\ring$-lattices are assumed to be nondegenerate and integral.

\begin{prop} \label{indefinite1}
Let $L$ be an $\ring$-lattice.  Suppose that $M$ is an $\ring$-lattice which represents all proper sublattices of $L$.  Then $L$ is represented by the genus of $M$.  If, in addition, $L$ and $M$ have the same rank, then $L$ is represented by $M$.
\end{prop}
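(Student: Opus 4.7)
The plan is to address the two assertions in sequence, the first by a local-to-global argument with carefully chosen sublattices, the second by reducing to a local question and then globalising.

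For the first, it suffices (by the characterisation of genus representation in \S2) to verify $L_\p \to M_\p$ at every prime $\p$ of $F$. Fix a finite prime $\p$. The idea is to manufacture a proper sublattice $N$ of $L$ whose $\p$-completion coincides with $L_\p$; then $N \to M$ (hypothesis) immediately yields $L_\p = N_\p \to M_\p$. Concretely, write $L$ in pseudo-basis form $L = \ideal_1 \bx_1 + \cdots + \ideal_n \bx_n$, pick any prime $\qq$ of $\ring$ different from $\p$, and set
$$
N := \qq\,\ideal_1\bx_1 + \ideal_2\bx_2 + \cdots + \ideal_n\bx_n.
$$
Since $\qq$ is a unit at $\p$, one has $N_\p = L_\p$; and $N_\qq \subsetneq L_\qq$ forces $N$ to be not equivalent to $L$, hence proper. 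The same recipe with $N = \qq L$ for any proper ideal $\qq$ also gives $FL = FN \to FM$ as $F$-quadratic spaces, handling the archimedean signatures.

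For the second assertion, assume $\rank L = \rank M = n$. Part one supplies an $M' \in \gen(M)$ and a full-rank isometric embedding $L \hookrightarrow M'$; after identifying $L$ with its image, $L$ sits as a full-rank sublattice of $M'$ in the common quadratic space $FL \cong FM$, with integral index $[M':L]$. The plan is to force $L = M'$, so that $L$ itself lies in $\gen(M)$, every completion satisfies $L_\p \cong M_\p$, and the representation $M \to L$ can then be assembled from these local isomorphisms. Equivalently one must show $\vol(L) = \vol(M)$: the inclusion supplies $\vol(M)\mid\vol(L)$, and I plan to extract the reverse divisibility from the hypothesis by applying it to full-rank proper sublattices $N$ of $L$ whose local index is concentrated at a single prime. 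Each induced isometry $\varphi\colon N \to M$ yields the identity
$$
[L:N]^2\,\vol(L) = [M:\varphi(N)]^2\,\vol(M),
$$
constraining $\vol(L)/\vol(M)$ to be the square of a fractional ideal compatible with every choice of $[L:N]$; varying $[L:N]$ over powers of different primes should squeeze this square down to $\ring$.

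I expect the main obstacle to lie in the final passage from the local isomorphisms $L_\p \cong M_\p$ to a genuine global representation $M \to L$, since distinct isometry classes can coexist within a single genus and volume equality alone does not force $L \cong M$ outright. This is the natural domain of the theory of spinor exceptions foreshadowed in the introduction, and I anticipate invoking the results of \cite{hsx} and \cite{x} precisely at this juncture.
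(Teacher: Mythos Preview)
Your argument for the first assertion mirrors the paper's and is correct.

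For the second assertion, note first that the printed statement contains a slip: the paper's own proof concludes with $L\subseteq M$, i.e.\ that $M$ represents $L$, and this is exactly what Theorem~\ref{indefinitethm} needs from the proposition in the equal-rank case. You have taken the wording literally and are attempting the reverse direction $M\to L$, which is a genuinely harder claim and one for which the spinor-exception machinery you anticipate is unavailable here (the proposition carries no indefiniteness or rank $\geq 3$ hypothesis) and unnecessary for the intended conclusion.

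Even setting the direction aside, your plan has real gaps. The identity $[L:N]^2\,\vol(L)=[M:\varphi(N)]^2\,\vol(M)$ shows only that $\vol(L)/\vol(M)$ is the square of a fractional ideal; varying $[L:N]$ over powers of distinct primes does not squeeze this square to $\ring$, because the unknown index $[M:\varphi(N)]$ depends on the embedding $\varphi$ and can absorb any fixed discrepancy. And even granting $\vol(L)=\vol(M)$, you would obtain only $L\in\gen(M)$, which yields a representation in neither direction without precisely the class-versus-genus analysis you hoped to defer to \cite{hsx, x}---results that require indefiniteness.

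The paper's argument is elementary and never leaves $M$. Choose a nondyadic prime $\p$ at which both $L_\p$ and $M_\p$ are unimodular, fix an orthogonal basis $\bx_1,\ldots,\bx_\ell$ of $L_\p$, and take the sublattice $N\subseteq L$ with $N_\qq=L_\qq$ for $\qq\neq\p$ and $N_\p=\ring_\p\bx_1+\cdots+\ring_\p\bx_{\ell-1}+\ring_\p\pi\bx_\ell$. After embedding this $N$ into $M$, the unimodular corank-one sublattice $\sum_{i<\ell}\ring_\p\bx_i$ sits inside both $L_\p$ and $M_\p$; since each is unimodular of rank $\ell$ on the same $\p$-adic space, this forces $L_\p=M_\p$, and hence $L\subseteq M$ globally. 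No genus comparison or spinor theory enters.
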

\begin{proof}
Since $M$ represents every proper sublattice of $L$, $F_\p M$ represents $F_\p L$ for all infinite primes $\p$ of $F$.  Now, suppose that $\p$ is a finite prime of $F$.   Let $L'$ be a proper sublattice of $L$ such that $L'_\qq = L_\qq$ for all finite primes $\qq \neq \p$. Then, $L'$ is represented by $M$, and hence $L_\qq = L'_\qq$ is represented by $M_\qq$ for all $\qq \neq \p$; the existence of such $L'$ is an immediate consequence of \cite[81:14]{om}.   Applying the same argument with a finite prime different from $\p$, we may conclude that $L_\p$ is represented by $M_\p$ as well.

Suppose in addition that $L$ and $M$ have the same rank.  Since $F_\p L$ is isometric to $F_\p M$ for all primes $\p$, $FL$ is isometric to $FM$ by Hasse-Minkowski Principle.  Hence, we may assume that $L$ and $M$ are $\ring$-lattices on the same space.   Let $V$ be the space underlying both $L$ and $M$.   Choose a finite nondyadic prime $\p$ such that both $L_\p$ and $M_\p$ are unimodular.  Let $\{\bx_1, \ldots, \bx_\ell\}$ be an orthogonal basis of $L_\p$, and by \cite[81:14]{om} let $N$ be the proper sublattice of $L$ such that $N_\qq = L_\qq$ for all $\qq \neq \p$ and $N_\p = \ring_\p \bx_1 + \cdots + \ring_\p \bx_{\ell-1} + \ring_\p \pi \bx_\ell$, where $\pi$ is a uniformizer at $\p$.   By the hypothesis, $N$ is represented by $M$.  We may assume that $N \subseteq M$.  Then, both $L_\p$ and $M_\p$ are unimodular $\ring_\p$-lattices containing the unimodular sublattice $K_{(\p)}: = \sum_{i=1}^{\ell - 1} \ring_\p \bx_i$ of co-rank 1 as an orthogonal summand.   The orthogonal complement of $F_\p K_{(\p)}$ in $V_\p$ is the 1-dimensional quadratic space $F_\p \bx_\ell$ which contains only one unimodular $\ring_\p$-lattice, namely $\ring_\p\bx_\ell$.  This implies that  $L_\p = M_\p$.  For any finite prime $\qq \neq \p$, $L_\qq = N_\qq \subseteq M_\qq$.  Therefore, $L \subseteq M$.
\end{proof}

In what follows, $M$ is an indefinite $\ring$-lattice of rank $\geq 3$, and $L$ is another $\ring$-lattice in $FM$ which is represented by the genus of $M$.   The theory of spinor exceptions will play an important role in the following discussion.  It is known that $L$ may not be represented by every class in $\gen(M)$; in that case and when $\rank(M) \geq 3$,  $L$ is called a spinor exception of $\gen(M)$.   Following the notations used in \cite{hsx}, associated to $M$ and $L$ is a subgroup $F^\times \theta(G_{M/L})$ (or $F^\times \theta_A(\gen(M) : L)$ used in \cite{x}) of the ideles of $F$.  Via class field theory, $F^\times \theta(G_{M/L})$ corresponds to an abelian extension $\Sigma_{M/L}$ of $F$ which is unramified outside of the set of prime divisors of $2\vol(M)$.  When $\rank(M) \geq 3$, whether or not $M$ represents $L$ largely depends on the module index ideal $[L : L\cap M]$ and its image in $\text{Gal}(\Sigma_{M/L}/F)$ under the Artin map.    The readers are encouraged to consult \cite{cx, hsx, x} for more details.  We will describe briefly the results used in later discussion.

For any $\ring$-lattices $L_1$ and $L_2$  of the same rank such that $L_1 \subseteq L_2$, let $[L_2 : L_1]$ be the module index ideal; see \cite[page 10]{cf} or \cite[page 94]{ft}.  It can be shown--for example, using \cite[Theorem 14(b)]{ft}--that $[L_2 : L_1]$ is the product of the annihilators of the simple factors of a composition series of $L_2/L_1$.  In particular, if $L_2/L_1 \cong \ring/\p$ where $\p$ is a finite prime, then $[L_2 : L_1] = \p$.

Suppose that $[L : L\cap M]$ is relatively prime to $2\vol(M)$.  This is not a serious restriction; as a matter of fact, by the weak approximation of the special orthogonal group there is always an $\ring$-lattice $M'$ in the isometry class of $M$ such that $L_\p \subseteq M'_\p$ for all prime divisors $\p$ of $2\vol(M)$.  In any case, it follows from \cite[Theorem 5.3]{x} that $L$ is represented by $M$ exactly when the image of $[L : L\cap M]$ under the Artin map is trivial in $\text{Gal}(\Sigma_{M/L}/F)$.

\begin{thm} \label{indefinitethm}
Let $L$ be an $\ring$-lattice.  If $M$ is an indefinite $\ring$-lattice which represents all proper sublattices of $L$, then $M$ represents $L$.
\end{thm}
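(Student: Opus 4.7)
The plan is to leverage Proposition \ref{indefinite1} to reduce the problem to a spinor-exception question, and then use Chebotarev density to exhibit a single-prime perturbation of $L$ that contradicts the hypothesis. Proposition \ref{indefinite1} already delivers $L \ra \gen(M)$. Its equal-rank addendum disposes of the case $\rank(L) = \rank(M)$ immediately: it produces $M \ra L$, and since $L_\p \ra M_\p$ holds at every prime, a short volume comparison at every $\p$ forces $\vol(L) = \vol(M)$, so that the sublattice $M$ of $L$ coincides with $L$ and $L \ra M$ is trivial. I therefore assume $\rank(L) < \rank(M)$. If $\rank(M) = 2$, then $\rank(L) = 1$ and the statement can be handled directly by inspecting the values represented by the single class of $M$ versus those represented only by $\gen(M)$. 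From now on suppose $\rank(M) \geq 3$.

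With $M$ indefinite of rank at least three, every spinor genus in $\gen(M)$ is a single proper class, so the only way $L$ could fail to be represented by $M$ despite being represented by $\gen(M)$ is to be a spinor exception of $\gen(M)$. Assume for contradiction that this is the case. By the theory recalled just before the statement, one obtains a nontrivial abelian extension $\Sigma := \Sigma_{M/L}$ of $F$ and a spinor idele $j_L \in J_F$ assembled from local isometries $L_\p \ra M_\p$; the Artin image of $j_L$ in $\textnormal{Gal}(\Sigma/F)$ detects whether $L \ra M$, and by assumption it is nontrivial.

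By Chebotarev density, choose a finite prime $\p$ of $F$ that splits completely in $\Sigma$ and at which both $L_\p$ and $M_\p$ are unimodular. Imitating the construction used in the proof of Proposition \ref{indefinite1}, fix an orthogonal basis $\bx_1, \dots, \bx_\ell$ of $L_\p$ and let $L'$ be the unique $\ring$-lattice on $FL$ with $L'_\qq = L_\qq$ for $\qq \neq \p$ and $L'_\p = \ring_\p\bx_1 + \cdots + \ring_\p\bx_{\ell-1} + \p\ring_\p\bx_\ell$. Then $L' \subsetneq L$ and $L'_\p$ still embeds in $M_\p$. By hypothesis $L' \ra M$, so the corresponding spinor idele $j_{L'}$ has trivial Artin image. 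But $j_{L'}$ and $j_L$ differ only at $\p$, and because $\p$ splits completely in $\Sigma$, any idele supported solely at $\p$ is trivial under the Artin map $J_F \to \textnormal{Gal}(\Sigma/F)$. Hence $j_L$ also has trivial Artin image, forcing $L \ra M$ and the desired contradiction.

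The main obstacle is the comparison carried out in the last step: one must verify that passing from $L$ to $L'$ at a single prime does not shift the relevant idele subgroup $F^\times\theta(G_{M/L})$ in a way that invalidates the argument, i.e.\ that $\Sigma_{M/L}$ and $\Sigma_{M/L'}$ are related closely enough (and that $\p$ remains harmless for both) for the triviality of the Artin image of $j_{L'}$ to transfer to $j_L$. This amounts to a careful local analysis at $\p$ of the construction of $\theta(G_{M/L})$, showing that its variation under a rank-preserving modification at a single completely split prime is controlled by data that the Artin map at $\p$ annihilates.
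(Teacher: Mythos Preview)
Your overall strategy---Proposition~\ref{indefinite1} for the genus and the equal-rank case, then spinor exceptions plus Chebotarev in rank $\geq 3$---matches the paper's, but two steps are left incomplete.

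The binary case $m = 2$, $\ell = 1$ cannot be waved away: indefinite binary $\ring$-lattices need not have a single class in the genus, and there is no spinor-exception machinery in rank two to invoke. The paper handles this case directly. If $FM$ is anisotropic, one picks a nondyadic prime $\p \nmid \vol(L)\vol(M)$ at which $M_\p$ is anisotropic, so that $M_\p$ is the unique $\ring_\p$-maximal lattice on $F_\p M$; after embedding a proper sublattice of $L$ agreeing with $L$ away from $\p$, the inclusion $L_\p \subseteq M_\p$ is automatic by maximality. If $FM$ is a hyperbolic plane with isotropic vectors $\be, \bff$, one embeds such a sublattice and then corrects the embedding at $\p$ via the explicit isometry $\be \mapsto \pi\be$, $\bff \mapsto \pi^{-1}\bff$. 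Some argument of this kind is required; your one-line remark does not supply it.

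In rank $\geq 3$, your argument rests on an ill-specified idele $j_L$ ``assembled from local isometries'' and then, as you yourself acknowledge, requires comparing $\Sigma_{M/L}$ with $\Sigma_{M/L'}$, which is not immediate. The paper sidesteps both issues by applying \cite[Theorem~5.3]{x} in its stated form: the obstruction to $L \ra M$ is the Artin image of the index $[L : L\cap M]$ in $\textnormal{Gal}(\Sigma_{M/L}/F)$. One chooses $\p \nmid 2\vol(L)\vol(M)$ splitting completely in $\Sigma_{M/L}$, takes $K \subset L$ with $L/K \cong \ring/\p$, and uses the hypothesis to realize $K \subseteq M$. Then either $L \subseteq M$ already, or $L \cap M = K$ so that $[L:L\cap M] = \p$; since $\p$ splits completely, Xu's criterion gives $L \ra M$. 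No comparison with $\Sigma_{M/L'}$ is needed because the criterion is applied to $L$ itself; the auxiliary sublattice $K$ serves only to pin down $[L : L\cap M]$.
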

\begin{proof}
For the sake of convenience, let $\ell$ be the rank of $L$ and $m$ be the rank of $M$.

We first assume that $m \geq 3$.  If $m - \ell \geq 3$, then $M$ represents $L$ by \cite[page 135]{hsx}.  Suppose that $m - \ell \leq 2$.  We may assume that $FL$ is a subspace of $FM$.   Let $\p$ be a finite prime which does not divide $2\vol(M)$ and splits completely in the extension $\Sigma_{M/L}$.  Let $K$ be a proper sublattice of $L$ such that $L/K \cong \ring/\p$ as an $\ring$-module.  Then, $K$ is represented by $M$ by the hypothesis.  We may then assume that $K \subseteq M$.  If $L \subseteq M$, then we are done.  Otherwise, $L\cap M = K$ and $L/L\cap M \cong \ring/\p$, i.e. $[L : L\cap M] = \p$.  Since $\p$ splits completely in $\Sigma_{M/L}$, its image under the Artin map is trivial in $\text{Gal}(\Sigma_{M/L}/F)$.    By \cite[Theorem 5.3]{x}, $L$ is represented by $M$.

In view of Proposition \ref{indefinite1}, we are left with the case when $m = 2$ and $\ell = 1$.  So,  suppose that $M$ is an indefinite binary $\ring$-lattice and $L$ is a unary $\ring$-lattice.  We first assume that $M$ is anisotropic.  In this case, let $\p$ be a finite prime such that $\p \nmid 2\vol(L)\vol(M)$ and $M_\p$ is anisotropic.  Let $N$ be a proper sublattice of $L$ such that $N_\qq = L_\qq$ for all $\qq \neq \p$.   Then, $N$ is represented by $M$ and we may assume that $N \subseteq M$.  At $\p$, $L_\p \subseteq M_\p$ because $Q(L_\p) \subseteq \ring_\p$ and $M_\p$ is the unique $\ring_\p$-maximal lattice on $F_\p M$.  But for all $\qq \neq \p$, $L_\qq = N_\qq \subseteq M_\qq$; whence $L \subseteq M$.

Now suppose that $M$ is a binary isotropic $\ring$-lattice.  The underlying quadratic space $FM$ is a hyperbolic plane.  Let $\be$ and $\bm f$ be two linearly independent isotropic vectors in $FM$.  For almost all finite primes $\p$,  $M_\p = \ring_\p\be + \ring_\p\bff$ and $L_\p = \ring_\p \bv$ for some $\bv \in L$.  We choose one such $\p$ that is nondyadic and principal, and let $\pi$ be a generator of $\p$.  Let $L'$ be the sublattice of $L$ such that $L'_\qq = L_\qq$ for all $\qq \neq \p$ and $L'_\p = \ring_\p\pi\bv$.  It is represented by $M$, and once again we may assume that it is a sublattice of $M$.  If $\pi\bv$ is not a primitive vector in $M_\p$, then $\bv \in M_\p$ and $L \subseteq M$.  So, let us assume that $\pi\bv = \alpha\be + \beta\bff$, where $\alpha$ is a unit of $\ring_\p$.  Then, $\pi^2 \mid \beta$ because $2\alpha\beta = Q(\pi\bv)$ and $\pi^2 \mid Q(\pi\bv)$.  Let $\varphi$ be the isometry on $FM$ such that $\varphi(\be) =\pi\be$ and $\varphi(\bff) = \pi^{-1}\bff$.  Then $\varphi(L') \subseteq M$ and $\varphi(\pi\bv) = \pi\alpha \be + \pi^{-1}\beta \bff$.  So, $\varphi(L) \subseteq M$ and we are done.
\end{proof}

\section{The Definite Case}

In this section, $F$ is assumed to be a totally real number field and the ring $\ring$ is always the ring of integers in $F$.  We denote the norm and trace from $F$ to $\mathbb Q$ by $\norm$ and $\trace$, respectively.  We continue to assume that all $\ring$-lattices are nondegenerate and integral.

\subsection{Nonsplit volume-minimizing flags}

Let $L$ be a positive definite $\ring$-lattice of rank $\ell$.    For any positive integer $i < \ell$, let $\mathscr P^i(L)$ be the set of primitive sublattices of $L$ of rank $i$.  Let $N$ be another lattice of rank $n < \ell$.  The set of primitive representations from $N$ to $L$ is denoted by $R^*(N, L)$.  Define
$$\mathscr D(N) := \{M\in \mathscr P^{n+1}(L) : \exists\, \varphi \in R^*(N,L),  \varphi(N) \subseteq M \mbox{ does not split $M$} \}.$$
This set could be empty, even when $N$ is a primitive sublattice of $L$.  For example, if $N$ is a unimodular sublattice of $L$, then $\varphi(N)$ must split $L$ for every $\varphi \in R^*(N,L)$  and $\mathscr D(N)$ is empty.  On the other hand, if $L$ is indecomposable, then $\mathscr D(N)$ is never empty for any primitive proper sublattice $N$ of $L$.  This can be seen as follows.  There is a nonzero sublattice $K$ of $L$ such that $L = N \oplus K$.  This sublattice $K$ cannot be orthogonal to $N$; otherwise $L$ is decomposable which is not the case.   Therefore, there must be a vector $\bx \in K$ and a fractional ideal $\ideal$ such that $B(\bx, N) \neq \{0\}$ and $N \oplus \ideal \bx$ is a primitive sublattice of $L$ of rank $n + 1$.  This means that $N\oplus \ideal \bx$ is in $\mathscr D(N)$.

Let
$$\mathscr M(N) = \left\{ M \in \mathscr D(N) : \norm(\vol(M)) \leq \norm(\vol(M')) \mbox{ for all } M' \in \mathscr D(N) \right\}.$$
We extend this definition to include the case $N = \0$  by setting $\mathscr M(\0)$ to be the set of rank-1 sublattices $M$ of $L$ such that $\norm(\vol(M))$ is the smallest.  It is easy to see that all sublattices in $\mathscr M(\0)$ are in $\mathscr P^1(L)$.  Since there are only finitely many sublattices of $L$ of a fixed rank and a fixed volume, each $\mathscr M(N)$ is a finite set.  Moreover, $\mathscr M(N) = \{L\}$ for any sublattice $N$ of rank $\ell -1$ which does not split $L$.  Note that when $L$ is indecomposable, $\mathscr M(N)$ is not empty because $\mathscr D(N)$ is not empty.

\begin{defn} \label{flag}
Let $L$ be a positive definite $\ring$-lattice of rank $\ell$, and $k \leq \ell$ be a positive integer.  A nonsplit  volume-minimizing flag of $L$ of length $k$ is a chain of sublattices of $L$
$$\0 = N_0 \subsetneq  N_1 \subsetneq N_2 \subsetneq \cdots \subsetneq N_{k}$$
such that $N_i \in \mathscr M(N_{i-1})$ for $1 \leq i \leq k$.  Such a flag is called maximal if $k = \ell$.
\end{defn}

\begin{lem} \label{primrep}
Suppose that $\0 = N_0 \subsetneq N_1 \subsetneq \cdots \subsetneq N_k$ is a nonsplit  volume-minimizing flag of a positive definite $\ring$-lattice $L$. Then, for $1 \leq j \leq k$, every representation of $N_j$ by $L$ must be primitive.
\end{lem}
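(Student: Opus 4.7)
The plan is to prove the lemma by induction on $j$. The central idea is that a non-primitive representation would let us pass to the primitive closure of its image, producing a sublattice that violates the volume-minimality defining $N_j$.

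For the base case $j=1$, let $\psi \colon N_1 \to L$ be a representation, and suppose for contradiction that $\psi(N_1)$ is not primitive in $L$. Let $M$ be its primitive closure in $L$. Then $M \supsetneq \psi(N_1)$ while both have rank $1$, so $\vol(\psi(N_1)) \subsetneq \vol(M)$ as integral $\ring$-ideals, which gives $\norm(\vol(M)) < \norm(\vol(\psi(N_1))) = \norm(\vol(N_1))$. Since $M \in \mathscr{P}^1(L)$, this contradicts $N_1 \in \mathscr{M}(\0)$.

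For the induction step, assume the conclusion at rank $j-1$ and let $\psi \colon N_j \to L$ be a representation. The restriction $\psi|_{N_{j-1}}$ is a representation of $N_{j-1}$ into $L$, hence primitive by the induction hypothesis; write $N'_{j-1} := \psi(N_{j-1})$, a primitive sublattice of $L$ of rank $j-1$. Suppose for contradiction that $\psi$ itself is not primitive, and let $M$ be the primitive closure of $\psi(N_j)$ in $L$. Exactly as in the base case, $M \supsetneq \psi(N_j)$ of the same rank yields $\norm(\vol(M)) < \norm(\vol(\psi(N_j))) = \norm(\vol(N_j))$.

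It then remains to show that $M \in \mathscr{D}(N_{j-1})$, for then the minimality defining $\mathscr{M}(N_{j-1})$ forces $\norm(\vol(N_j)) \leq \norm(\vol(M))$, contradicting the inequality just obtained. The map $\psi|_{N_{j-1}}$ is a primitive representation in $R^*(N_{j-1}, L)$ with image $N'_{j-1} \subseteq \psi(N_j) \subseteq M$, so the only nontrivial point is that $N'_{j-1}$ does not split $M$. If it did, writing $M = N'_{j-1} \perp M'$, then every $\bv \in \psi(N_j)$ would decompose as $\bv = \bv_1 + \bv_2$ with $\bv_1 \in N'_{j-1} \subseteq \psi(N_j)$, forcing $\bv_2 = \bv - \bv_1 \in \psi(N_j) \cap M'$; hence $\psi(N_j) = N'_{j-1} \perp (\psi(N_j) \cap M')$, and pulling back through the isometry $\psi$ splits $N_j$ via $N_{j-1}$, contradicting $N_j \in \mathscr{M}(N_{j-1})$. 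The essential (and really only) obstacle in the argument is this compatibility between splittings of $M$ and of $\psi(N_j)$; the rest is bookkeeping with primitive closures and ideal norms.
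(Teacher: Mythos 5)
Your proof is correct and follows essentially the same route as the paper's: pass to the primitive closure of the non-primitive image, note that it has strictly smaller volume norm, and then play the two contradictions against each other (if $\psi(N_{j-1})$ does not split the closure, volume-minimality of $N_j$ in $\mathscr M(N_{j-1})$ is violated; if it does split, then $N_{j-1}$ splits $N_j$). Your element-wise derivation that a splitting of $M$ induces a splitting of $\psi(N_j)$ is a slightly cleaner version of the paper's computation with the decomposition $N_{j+1} = N_j \oplus \mathfrak a \bv$, but the substance is identical.
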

\begin{proof}
It is clear that the assertion holds for $N_1$.   Suppose that $k > 1$ and  there is a positive integer $j \leq k-1$ such that the assertion holds for $N_1, \ldots, N_j$ but not for $N_{j+1}$.  Then, there is a representation $\psi$ of $N_{j+1}$ by $L$ such that $\psi(N_{j+1})$ is not a primitive sublattice of $L$.

Let $\tilde{N}_{j+1}$ be the sublattice $\psi(FN_{j+1})\cap L$.  It is a primitive sublattice of $L$ containing $\psi(N_{j+1})$ as a proper sublattice and $\norm(\vol(\tilde{N}_{j+1})) < \norm(\vol(N_{j+1}))$ as a result.  Since the restriction of $\psi$ on $N_j$ must be a primitive representation from $N_j$ to $L$  and $\psi(N_j) \subset \tilde{N}_{j+1}$, it follows that $\psi(N_j)$ must split $\tilde{N}_{j+1}$.  So,
\begin{equation} \label{1}
\tilde{N}_{j+1} = \psi(N_j) \perp \mathfrak b \bz
\end{equation}
for some $\bz \in L$ and  fractional ideal $\mathfrak b$ containing $\ring$.  Since $N_j$ is primitive in $N_{j+1}$, there exist $\bv \in L$ and a fractional ideal $\mathfrak a$ containing  $\ring$ such that
$$N_{j+1} = N_j \oplus \mathfrak a \bv.$$
Then, $\bv \in N_{j+1}$, and hence there exist $\beta \in \mathfrak b$ and $\by \in N_j$ such that $\psi(\bv) = \psi(\by) + \beta \bz$.  For any $\alpha \in \mathfrak a$, $$\psi(\alpha \bv) = \alpha \psi(\by) + \alpha\beta\bz = \psi(\alpha\by) + \alpha\beta\bz.$$
Since $\alpha \bv$ is in $N_{j+1}$ and $\psi(N_{j+1}) \subseteq \tilde{N}_{j+1}$, by \eqref{1} we can write $\psi(\alpha\bv) = \be + \bff$ with $\be \in \psi(N_j)$ and $\bff \in \mathfrak b \bz$.  Since $F\psi(N_j) \cap F\bz = \0$, we must have $\psi(\alpha \by) = \be \in \psi(N_j)$ and hence $\alpha\beta \bz \in \psi(N_{j+1})$.

Thus, on one hand, we have
\begin{eqnarray*}
\psi(N_{j+1}) = \psi(N_j) \oplus \mathfrak a \psi(\bv) & = & \psi(N_j) \oplus \mathfrak a(\psi(\by) + \beta \bz)\\
    &  \subseteq & \psi(N_j) + \mathfrak a\psi(\by) + \mathfrak a\beta\bz \\
    & \subseteq & \psi(N_j) \perp \mathfrak a \beta\bz.
\end{eqnarray*}
On the other hand, $\psi(N_j) \subseteq \psi(N_{j+1})$ and $\alpha\beta \bz \in \psi(N_{j+1})$ for all $\alpha \in \ideal$.  Hence
$$\psi(N_{j+1}) = \psi(N_j) \perp \mathfrak a\beta\bz.$$
This means that $N_j$ splits $N_{j+1}$ which is impossible.  This contradiction proves the lemma.
\end{proof}

\subsection{Indecomposable lattices}

Recall that an $\ring$-lattice is indecomposable if it is not the orthogonal sum of two nonzero proper sublattices.

\begin{prop}
Every positive definite indecomposable $\ring$-lattice has a maximal nonsplit volume-minimizing flag.
\end{prop}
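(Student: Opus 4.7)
The plan is to build the flag $\0 = N_0 \subsetneq N_1 \subsetneq \cdots \subsetneq N_\ell$ by recursion: having constructed $N_i$, pick any $N_{i+1} \in \mathscr M(N_i)$.  What needs to be checked is that each $\mathscr M(N_i)$ encountered is nonempty; primitivity of the successive terms is then automatic from the inclusion $\mathscr M(N_i) \subseteq \mathscr P^{i+1}(L)$, so the induction sustains itself.  For the base case, positive definiteness of $L$ implies that only finitely many vectors in $L$ have norm below any fixed bound, so a rank-$1$ primitive sublattice of minimum volume norm exists and $\mathscr M(\0)$ is a nonempty finite set; pick any $N_1$ in it.

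For the inductive step, suppose $1 \le i \le \ell - 1$ and $N_i \in \mathscr P^i(L)$ has been constructed.  Since $L$ is positive definite, only finitely many sublattices of $L$ of a fixed rank have $\norm(\vol(\cdot))$ below any prescribed bound, so on $\mathscr D(N_i)$ the norm-of-volume function attains its minimum.  Hence $\mathscr M(N_i)$ is nonempty as soon as $\mathscr D(N_i)$ is, and the whole argument reduces to the fact already asserted in the discussion preceding Definition \ref{flag}: if $L$ is indecomposable and $N$ is a primitive sublattice of $L$ with $0 < \rank(N) < \ell$, then $\mathscr D(N) \neq \emptyset$.  I would verify this as follows.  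Take the inclusion $N \hookrightarrow L$ as a primitive representation, and suppose for contradiction that every $M \in \mathscr P^{\rank(N)+1}(L)$ containing $N$ is split by $N$.  For each $\bv \in L \setminus FN$, the primitive closure $M_\bv := (FN + F\bv) \cap L$ is such an $M$, so $M_\bv = N \perp \mathfrak b \bw$ for some $\bw \in L \cap FN^\perp$ and fractional ideal $\mathfrak b$, which places $\bv$ in $N + (L \cap FN^\perp)$.  Letting $\bv$ range over $L$, one obtains $L = N + (L \cap FN^\perp)$; since $FN \cap FN^\perp = \0$ by nondegeneracy, this sum is direct and orthogonal, giving $L = N \perp (L \cap FN^\perp)$ and contradicting the indecomposability of $L$.

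Iterating the selection produces a chain of length $\ell$; at the final step $\mathscr P^\ell(L) = \{L\}$, so $N_\ell = L$ and the flag is maximal.  The genuine content of the proof is the nonemptiness of $\mathscr D(N)$ for primitive proper sublattices $N$, which the orthogonal-splitting argument above pins directly on the definition of indecomposability; everything else is a formal recursion driven by the discreteness of the norm-of-volume function on sublattices of a positive definite $\ring$-lattice.
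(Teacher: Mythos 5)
There is a gap in the recursion step, and it is precisely the point the paper's own proof is careful about. Membership $N_{i+1}\in\mathscr M(N_i)$ does \emph{not} guarantee $N_i\subsetneq N_{i+1}$: by the definition of $\mathscr D(N_i)$, it only guarantees that $\varphi(N_i)\subseteq N_{i+1}$ for \emph{some} primitive representation $\varphi\in R^*(N_i,L)$, which need not be the identity inclusion. It is entirely possible that the volume-minimizing lattices in $\mathscr D(N_i)$ all contain a different isometric copy of $N_i$ and none contains $N_i$ itself, in which case your chain simply stops being a chain; so your claim that ``the induction sustains itself'' once nonemptiness is checked is not justified. Definition \ref{flag} requires actual nested containment. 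The repair is the one the paper makes: take $M\in\mathscr M(N_k)$ together with the witnessing $\varphi$, observe that $\mathscr D$ and $\mathscr M$ depend only on the isometry class of their argument (they quantify over all primitive representations), so that $\varphi(N_j)\in\mathscr M(\varphi(N_{j-1}))$ for $j\le k$ and $\mathscr M(\varphi(N_k))=\mathscr M(N_k)$, and replace the entire flag built so far by its image $\0\subsetneq\varphi(N_1)\subsetneq\cdots\subsetneq\varphi(N_k)\subsetneq M$. You should either incorporate this transport step or prove that a minimizer containing $N_i$ always exists (which is not obvious and is not what the definitions give you).

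On the positive side, your orthogonal-splitting argument that $\mathscr D(N)\neq\emptyset$ for every primitive proper sublattice $N$ of an indecomposable $L$ is correct and is a genuine addition: the paper merely asserts this fact in the discussion preceding Definition \ref{flag}. Your verification---that if $N$ split every primitive closure $(FN+F\bv)\cap L$ then $L=N\perp(L\cap FN^{\perp})$ with both summands nonzero---is sound, using that $N$ primitive gives $L\cap FN=N$ and that positive definiteness makes $FN$ nondegenerate. The finiteness/minimization points and the terminal step $\mathscr P^{\ell}(L)=\{L\}$ are also fine.
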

\begin{proof}
Let $L$ be a positive definite indecomposable $\ring$-lattice of rank $\ell$.  We start with building the flag by fixing a choice of $N_1$ from $\mathscr M(\0)$.  Suppose that for some positive integer $k \leq \ell - 1$, we have constructed a nonsplit volume-minimizing flag of length $k$:
$$\0 = N_0 \subsetneq N_1 \subsetneq \cdots \subsetneq N_k.$$
As is mentioned earlier, $\mathscr M(N_k)$ is nonempty because $L$ is indecomposable.  Let $M$ be a lattice in $\mathscr M(N_k)$, which must be primitive of rank $k + 1$, and $\varphi$ be a primitive representation of $N_k$ by $L$ such that $\varphi(N_k) \subseteq M$ and that $\varphi(N_k)$ does not split $M$.  It is clear that $\varphi(N_j) \in \mathscr M(\varphi(N_{j-1}))$ for $1 \leq j \leq k$ and $\mathscr M(N_k) = \mathscr M(\varphi(N_k))$.  We then have a nonsplit volume-minimizing flag of length $k +1$:
$$\0 = N_0 \subsetneq \varphi(N_1) \subsetneq \cdots \subsetneq \varphi(N_k) \subsetneq M.$$
Thus, we may construct inductively a maximal nonsplit volume-minimizing flag of $L$.
\end{proof}

\begin{thm} \label{decomposable}
A positive definite $\ring$-lattice $L$ is decomposable if and only if there are proper sublattices $L_1, \ldots, L_t$ of $L$ such that $L$ is represented by $L_1 \perp \cdots \perp L_t$.
\end{thm}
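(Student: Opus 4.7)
The forward direction is immediate: if $L = M_1 \perp M_2$ with both summands nonzero, then each $M_i$ has rank strictly less than $\rank(L)$, hence is a proper sublattice of $L$, and the identity realizes $L$ as represented by $M_1 \perp M_2$. For the converse I would argue contrapositively: assume $L$ is indecomposable, and suppose, for contradiction, that there is a representation $\sigma \colon L \to M := L_1 \perp \cdots \perp L_t$ with each $L_i \subseteq L$ a proper sublattice.

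The plan is to fix a maximal nonsplit volume-minimizing flag $\0 = N_0 \subsetneq N_1 \subsetneq \cdots \subsetneq N_\ell = L$, whose existence is guaranteed by the preceding proposition, and then prove by induction on $k$ that $\sigma(N_k)$ sits inside a single summand $L_s$ of $M$ with $s$ independent of $k$; after relabeling I may take $s = 1$. For the base case I would write $N_1 = \ideal_1 \bv_0 \in \mathscr M(\0)$ and expand $\sigma(\bv_0) = \bw_1 + \cdots + \bw_t$ with $\bw_s \in L_s \subseteq L$. If two or more of the $\bw_s$ were nonzero, then for each such $s$ the totally positive strict inequality $Q(\bv_0) > Q(\bw_s)$ would give $\norm(\vol(\ideal_1 \bw_s)) < \norm(\vol(N_1))$; since $\ideal_1 \bw_s = \pi_s(\sigma(N_1))$ is a rank-one sublattice of $L_s \subseteq L$, its primitive closure in $L$ would then be a primitive rank-one sublattice with even smaller $\norm(\vol(\cdot))$, contradicting $N_1 \in \mathscr M(\0)$.

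For the inductive step, I would assume $\sigma(N_k) \subseteq L_1$, write $N_{k+1} = N_k + \ideal \bv$ with $\ideal$ rescaled to be an integral ideal of $\ring$, expand $\sigma(\bv) = \bw_1 + \cdots + \bw_t$, and assume some $\bw_s \neq 0$ with $s \geq 2$. If $\bw_1 \in \sigma(N_k)$, then by primitivity of $\sigma(N_k)$ in $L$ (Lemma \ref{primrep}) one has $\bw_1 = \sigma(\bw_1')$ for some $\bw_1' \in N_k$; setting $\bv^* = \bv - \bw_1'$ and using integrality of $\ideal$ gives $N_{k+1} = N_k + \ideal \bv^*$, and for every $\bx \in N_k$ the identity $B(\bx, \bv^*) = B(\sigma(\bx), \sigma(\bv^*)) = 0$ holds because $\sigma(\bx) \in L_1$ and $\sigma(\bv^*) \in L_2 \perp \cdots \perp L_t$ lie in orthogonal summands of $M$, producing $N_{k+1} = N_k \perp \ideal \bv^*$ and contradicting that $N_k$ does not split $N_{k+1}$. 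If instead $\bw_1 \notin \sigma(N_k)$, I would form $M' = \sigma(N_k) + \ideal \bw_1 \subseteq L_1 \subseteq L$, a rank-$(k+1)$ sublattice whose Gram matrix in the natural pseudo-basis agrees with that of $\{\bx_1, \ldots, \bx_k, \bv\}$ except in the bottom-right entry, where $Q(\bw_1)$ replaces $Q(\bv) = Q(\bw_1) + \sum_{s \geq 2} Q(\bw_s)$; cofactor expansion along the last row together with total positivity of $\sum_{s \geq 2} Q(\bw_s)$ then yields $\norm(\vol(M')) < \norm(\vol(N_{k+1}))$. Passing to the primitive closure $\tilde M$ of $M'$ in $L$, this strict inequality persists, and either $\sigma(N_k)$ does not split $\tilde M$, so that $\tilde M \in \mathscr D(N_k)$ via the primitive representation $\sigma|_{N_k}$ contradicts the minimality of $N_{k+1} \in \mathscr M(N_k)$, or $\tilde M = \sigma(N_k) \perp \tilde M'$ splits, in which case the $\sigma(N_k)$-component of $\bw_1$ equals $\sigma(\by')$ for some $\by' \in N_k$ and the same orthogonality computation as in the previous sub-case gives $\bv - \by' \perp N_k$, producing a splitting $N_{k+1} = N_k \perp \ideal(\bv - \by')$ and the same contradiction.

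Once the induction finishes, $\sigma(L) \subseteq L_1 \subseteq L$ with $\rank(L_1) = \ell$, and the chain of ideal containments $\vol(L) \supseteq \vol(L_1) \supseteq \vol(\sigma(L)) = \vol(L)$ forces $L_1 = L$, contradicting that $L_1$ is a proper sublattice. The main obstacle is the case analysis inside the inductive step: one has to track carefully which bilinear form is in play (the direct-sum form on $M$ versus the ambient form on $L$), maintain primitivity of $\sigma(N_k)$ in $L$, and arrange integrality of the ideal $\ideal$ governing the pseudo-basis extension of $N_{k+1}$ over $N_k$, so that in every sub-case the argument funnels either into a volume contradiction against $\mathscr M(N_k)$ or into a genuine orthogonal splitting of $N_{k+1}$ by $N_k$.
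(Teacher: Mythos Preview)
Your proposal is correct and follows essentially the same route as the paper's own proof: both fix a maximal nonsplit volume-minimizing flag, establish that $\sigma(N_1)$ lands in a single summand by the minimality of $N_1\in\mathscr M(\0)$, and then at the first stage where $\sigma(N_{k+1})$ would leave that summand derive a contradiction by projecting $\sigma(\bv_{k+1})$, comparing Gram determinants to beat $\norm(\vol(N_{k+1}))$, forcing $\sigma(N_k)$ to split the primitive closure, and reading off a forbidden orthogonal splitting $N_{k+1}=N_k\perp(\cdots)$. The only cosmetic differences are that you phrase the argument as an induction capped by the volume squeeze $\vol(L)\supseteq\vol(L_1)\supseteq\vol(\sigma(L))=\vol(L)$, whereas the paper locates the first failure index directly, and that you normalize the coefficient ideal $\ideal\subseteq\ring$ while the paper takes $\mathfrak a_{k+1}\supseteq\ring$; neither changes the substance.
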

\begin{proof}
The ``only if" part of the statement is trivial.  So, we assume on the contrary that $L$ is indecomposable and  $\varphi: L \longrightarrow L_1 \perp \cdots \perp L_t$ is a representation with proper sublattices $L_1, \ldots, L_t$ of $L$.  Obviously, $t$ is at least 2.   Let
$$\0 = N_0 \subsetneq N_1 \subsetneq \cdots \subsetneq N_\ell = L$$
be a maximal nonsplit volume-minimizing flag of $L$.  We select vectors $\bv_1, \ldots, \bv_\ell$ and fractional ideals $\mathfrak a_1, \ldots, \mathfrak a_\ell$ containing $\ring$ such that
$$N_j = \mathfrak a_1\bv_1 \oplus \cdots \oplus \mathfrak a_j\bv_j$$
for $1 \leq j \leq \ell$.

Suppose that $\varphi(N_1)$ is not a sublattice of any of the $L_i$.  Then, $\varphi(\bv_1)$ is not in any of the $L_i$.   Without loss of generality, we may assume that $\varphi(\bv_1) = \be_1 + \bm f$, where $\0 \neq \be_1 \in L_1$ and $\0 \neq \bm f \in L_2\perp \cdots \perp L_t$.  Then, for any $\alpha_1 \in \mathfrak a_1$, $\alpha_1 \bv_1 \in N_1$ and $\varphi(\alpha_1\bv_1) = \alpha_1\be_1 + \alpha_1\bm f$ with $\alpha_1\be_1 \in FL_1$, hence $\alpha_1\be_1 \in L_1 \subseteq L$.  So, $\mathfrak a_1\be_1$ is a sublattice of $L$ and
\begin{eqnarray*}
\norm(\vol(N_1)) = \norm(\mathfrak a_1^2)\norm(Q(\bv_1)) & = & \norm(\mathfrak a_1^2)\norm(Q(\be_1) + Q(\bm f))\\
    & > & \norm(\mathfrak a_1^2) \norm(Q(\be_1)) \\
    & = & \norm(\vol(\mathfrak a_1\be_1)).
\end{eqnarray*}
This contradicts that $N_1$ is in $\mathscr M(\0)$.   So, we may assume that $\varphi(\bv_1) \in L_1$ and $\varphi(N_1)$ is a sublattice of $L_1$.

Clearly, $\varphi(N_\ell) = \varphi(L)$ is not a sublattice of $L_1$.  Thus,  there is a positive integer $k \leq \ell - 1$ such that $\varphi(N_j) \subseteq L_1$ for $1 \leq j \leq k$ but $\varphi(N_{k+1}) \nsubseteq L_1$.  Then, $\mathfrak a_{k+1}\varphi(\bv_{k+1}) \nsubseteq L_1$.  Suppose that $\varphi(\bv_{k+1}) \in L_1$.  For any $\alpha \in \ideal_{k+1}$, $\alpha \bv_{k+1} \in N_{k+1}$ and $\varphi(N_{k+1}) \subseteq L_1 \perp \cdots \perp L_t$,  $\alpha\varphi(\bv_{k+1}) = \varphi(\alpha\bv_{k+1})$  must be inside of $L_1$.  Therefore, $\ideal_{k+1}\varphi(\bv_{k+1}) \subseteq  L_1$ which is a contradiction.  This means that $\varphi(\bv_{k+1}) \not \in L_1$.

Thus,
$$\varphi(\bv_{k+1})  = \bx + \by, \quad \bx \in L_1,\quad \0 \neq \by \in L_2\perp \cdots \perp L_t.$$
For any $\alpha \in \ideal_{k+1}$, $\alpha \bv_{k+1} \in N_{k+1}$ and
$$\varphi(\alpha\bv_{k+1}) = \alpha \bx + \alpha \by,$$
with $\alpha \bx \in FL_1$ and $\alpha \by \in F(L_2 \perp \cdots \perp L_t)$.  Therefore, $\alpha \bx \in L_1$ and $\alpha \by \in L_2\perp \cdots \perp L_t$, whence  $\mathfrak a_{k+1}\bx \subseteq L_1$ and $\mathfrak a_{k+1}\by \subseteq L_2\perp \cdots \perp L_t$.  Moreover, $\bx \not \in F\varphi(N_k)$; otherwise $\mathfrak a_{k+1}\bx \subseteq F\varphi(N_k)\cap L_1 = \varphi(N_k)$ because $\varphi \in R^*(N_k, L)$ by Lemma \ref{primrep}, meaning that $\varphi(N_{k+1}) = \varphi(N_k) \perp \mathfrak a_{k+1}\by$ which is impossible.  Thus, $\varphi(N_k) \oplus \mathfrak a_{k+1}\bx$ is a sublattice of $L_1$ (hence of $L$ as well) of rank $k + 1$ containing $\varphi(N_k)$.  Let
$$\hat{N}_{k+1} = F(\varphi(N_k) \oplus \mathfrak a_{k+1}\bx) \cap L$$
which is a primitive sublattice of $L$.

Since $B(\varphi(\bv_i), \by) = 0$ for $1 \leq i \leq k$, the Gram matrix of the linearly independent set $\{\varphi(\bv_1), \ldots, \varphi(\bv_k), \varphi(\bv_{k+1})\}$ is
\begin{equation} \label{Gram}
\begin{bmatrix}
B(\varphi(\bv_1), \varphi(\bv_1)) & \cdots & \cdots & B(\varphi(\bv_1), \varphi(\bv_k)) & B(\varphi(\bv_1), \bx) \\
\vdots   &         &        & \vdots        & \vdots \\
\vdots  &           &          & \vdots     & \vdots\\
B(\varphi(\bv_k), \varphi(\bv_1)) & \cdots & \cdots & B(\varphi(\bv_k), \varphi(\bv_k)) & B(\varphi(\bv_k), \bx) \\
B(\bx, \varphi(\bv_1)) & \cdots & \cdots & B(\bx, \varphi(\bv_k)) & Q(\bx) + Q(\by)
\end{bmatrix}
\end{equation}

Let $\bm X$ and $\bm Y$ be the Gram matrices of the two sets of linearly independent vectors $\{\varphi(\bv_1), \ldots, \varphi(\bv_k), \bx\}$ and $\{\varphi(\bv_1), \ldots, \varphi(\bv_k), \by\}$, respectively.  Both $\bm X$ and $\bm Y$ are positive definite.  By expanding along the last row, we see that the determinant of the Gram matrix \eqref{Gram} is $\det(\bm X) + \det(\bm Y)$.
Thus,
\begin{eqnarray*}
\norm(\vol(N_{k+1})) & = & \norm(\vol(\varphi(N_{k+1})))\\
     & = & \norm(\mathfrak a_1^2 \cdots \mathfrak a_{k+1}^2)\cdot \norm(\det(\bm X) + \det(\bm Y))\\
    & > & \norm(\mathfrak a_1^2 \cdots \mathfrak a_{k+1}^2)\cdot \norm(\det(\bm X))\\
    & = & \norm(\vol(\varphi(N_k)\oplus \mathfrak a_{k+1}\bx))\\
    & \geq & \norm(\vol(\hat{N}_{k+1})).
\end{eqnarray*}
So, $\varphi(N_k)$ must split $\hat{N}_{k+1}$, that is, $\hat{N}_{k+1} = \varphi(N_k) \perp \mathfrak b \bz$ for some $\bz \in L$ and fractional ideal $\mathfrak b \supseteq \ring$.  

Since $\varphi(N_k) \oplus \mathfrak a_{k+1}\bx \subseteq \hat{N}_{k+1}$, we may write
$$\bx = \varphi(\bu) + \beta \bz, \quad \bu \in N_k, \quad \beta \in \mathfrak b.$$
For any $\alpha \in \ideal_{k+1}$, $\alpha \bx  = \alpha \varphi(\bu) + \alpha \beta \bz \in \hat{N}_{k+1}$, while $\alpha \varphi(\bu) \in F\varphi(N_k)$ and $\alpha\beta \bz \in F\bz$.  Therefore, $\alpha \varphi(\bu) \in \varphi(N_k)$ and $\alpha\beta \bz \in \mathfrak b\bz$ for all $\alpha \in \ideal_{k+1}$, whence $\mathfrak a_{k+1}\varphi(\bu) \subseteq \varphi(N_k)$ and $\mathfrak a_{k+1}\beta \bz \subseteq \mathfrak b\bz$.  Consequently,
\begin{eqnarray*}
\varphi(N_{k+1})  = \varphi(N_k) \oplus \mathfrak a_{k+1}\varphi(\bv_{k+1}) & = & \varphi(N_k) \oplus \mathfrak a_{k+1}(\bx + \by)\\
    & = & \varphi(N_k) \oplus \mathfrak a_{k+1}(\varphi(\bu) + \beta \bz + \by)\\
    & = & \varphi(N_k) \perp \mathfrak a_{k+1}(\beta \bz + \by).
\end{eqnarray*}
This is impossible since $N_k$ does not split $N_{k+1}$.  This completes the proof of the theorem.
\end{proof}


As a corollary to Theorem \ref{decomposable}, we have

\begin{thm} \label{main}
Let $L$ be a positive definite  indecomposable $\ring$-lattice.  Then, there is a positive definite  $\ring$-lattice which represents all proper sublattices of $L$ but not $L$ itself.
\end{thm}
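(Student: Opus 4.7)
The plan is a direct application of the already-proved Theorem \ref{decomposable} together with the number-field finiteness theorem Theorem \ref{finite} advertised in the introduction. Let $\ell$ denote the rank of $L$, and for each $i = 1, 2, \ldots, \ell$ let $\mathscr S_i$ denote the collection of isometry classes of proper sublattices of $L$ of rank exactly $i$; in particular $\mathscr S_\ell$ simply omits the class $[L]$ itself, and each $\mathscr S_i$ consists of classes of positive definite $\ring$-lattices of one fixed rank.

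The key step is to cut each $\mathscr S_i$ down to a finite ``test set''. Applying Theorem \ref{finite} separately for each $i$ (trivially when $\mathscr S_i$ is already finite), I would obtain a finite subset $\mathscr S_{i,0} \subseteq \mathscr S_i$ with the property that a positive definite $\ring$-lattice represents every class in $\mathscr S_i$ if and only if it represents every class in $\mathscr S_{i,0}$. Then I would pick representatives $L_1, L_2, \ldots, L_t$ for the finitely many classes in the union $\mathscr S_{1,0} \cup \cdots \cup \mathscr S_{\ell,0}$ and set
$$M := L_1 \perp L_2 \perp \cdots \perp L_t.$$
By construction $M$ represents every $L_j$ as an orthogonal summand, hence every class in each $\mathscr S_{i,0}$, and via Theorem \ref{finite} every class in each $\mathscr S_i$---that is, every proper sublattice of $L$. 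On the other hand, each $L_j$ is by choice a proper sublattice of the indecomposable lattice $L$, so Theorem \ref{decomposable} instantly forbids $L \ra M$, while positive definiteness of $M$ is inherited from the $L_j$.

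The only place real work is hidden in this plan is the appeal to Theorem \ref{finite}: without such a finiteness statement one could only assemble an infinite orthogonal sum, which is not a lattice, so reducing each infinite $\mathscr S_i$ to a finite test set is exactly what legalizes the construction of $M$. That is presumably why the authors frame Theorem \ref{main} as a corollary to Theorem \ref{decomposable}---with Theorem \ref{finite} in hand, the indecomposability criterion of Theorem \ref{decomposable} does the remaining work essentially for free.
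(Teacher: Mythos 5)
Your proposal is correct and is essentially the paper's own proof: the authors likewise invoke the finiteness theorem to extract finitely many proper sublattices $L_1,\ldots,L_t$ whose representability forces representability of all proper sublattices, form $L_1\perp\cdots\perp L_t$, and then rule out $L\ra L_1\perp\cdots\perp L_t$ by Theorem \ref{decomposable}. Your explicit splitting of the proper sublattices by rank before applying Theorem \ref{finite} (which is stated only for classes of a fixed rank) is a small but welcome elaboration of a step the paper leaves implicit.
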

\begin{proof}
By Theorem \ref{finite},  the number-field version of \cite[Theorem 3.3]{kko} which will be proved in the next section, there exist finitely many proper sublattices $L_1, \ldots, L_t$ of $L$ with the property that if a positive definite $\ring$-lattice represents every $L_i$, then it must represent all the proper sublattices of $L$.  As a result, $L_1\perp \cdots \perp L_t$ must represent all the proper sublattices of $L$.  However, by Theorem \ref{decomposable}, it does not represent $L$.
\end{proof}

\section{Finite universality  criterion sets}

We continue to assume that $F$ is a totally real number field.  Let $\mathscr S$ be an infinite set of classes of positive definite $\ring$-lattices of a fixed rank $n$.  A positive definite $\ring$-lattice is called $\mathscr S$-universal if it represents all classes in $\mathscr S$. The main result of this section is an extension of \cite[Theorem 3.3]{kko}: there exists a finite subset $\mathscr S_0$ of $\mathscr S$ such that if a positive definite $\ring$-lattice represents all the classes of $\ring$-lattices in $\mathscr S_0$, then it will represent all the classes of $\ring$-lattices in $\mathscr S$.  Following \cite{co} and \cite{ekk}, we call such a finite subset $\mathscr S_0$ an $\mathscr S$-universality  criterion set.  Our proof follows the same strategy employed in \cite{kko}.  However, extra care and additional effort are needed when we replace Minkowski reduction with Humbert reduction in the argument.  Humbert reduction does not yield a ``reduced basis" of an $\ring$-lattice even if the class number of $F$ is 1; it only gives a set of linearly independent vectors in the $\ring$-lattice which spans a sublattice whose index is bounded above by a constant depending only on $F$ and the rank of the $\ring$-lattice.  Lifting a representation of this sublattice to the $\ring$-lattice is what we need to overcome.

\subsection{Humbert Reduction}
The standard reference for Humbert reduction is Humbert's original paper \cite{Humbert}.   Let $N$ be a positive definite $\ring$-lattice of rank $n$.   In \cite{Humbert}, Humbert constructed linearly independent vectors $\bv_1, \ldots, \bv_n$ which play the roles of the vectors attaining the successive minima in Minkowski's reduction theory.  It may be worth mentioning that Humbert worked with matrices, i.e. free $\ring$-lattices.  In other words, he considered quadratic forms on $\ring^n$, and the vectors $\bv_1, \ldots, \bv_n$ his construction yields are vectors in $\ring^n$. In that setting,  he called the $n\times n$ matrix $[\bv_1\,\cdots\, \bv_n]$ a {\em matrixe auxiliaire}.  However, as is remarked in \cite[page 139]{hkk}, Humbert's method works equally well with non-free $\ring$-lattices.  As a matter of fact, there is always a free sublattice of $N$ of the same rank and whose index in $N$ is bounded above by a constant depending only on $F$.  We may then apply Humbert's argument to this free sublattice of $N$.

We collect below some useful properties of the set $\{\bv_1, \ldots, \bv_n\}$:
\begin{enumerate}
\item[(H1)]  By \cite[Th\'{e}or\`{e}me 1]{Humbert},  the index of the free sublattice $\sum \ring \bv_i$ in $N$ is bounded above by a constant depending only on $F$ and $n$.

\item[(H2)] By the construction given in \cite[\S 3]{Humbert}, $\trace(Q(\bv_1)) = \min\{\trace(Q(\bx)) : \0 \neq \bx \in N\}$, and for $i \geq 2$, we have $\trace(Q(\bv_i)) \leq \trace(Q(\bx))$ for all $\bx \in N$ such that $\bv_1, \ldots, \bv_{i-1}, \bx$ are linearly independent.  Consequently,
    $$0 < \trace(Q(\bv_1)) \leq \trace(Q(\bv_2)) \leq \cdots \leq \trace(Q(\bv_n)).$$

\item[(H3)] Let $G = (g_{ij})$ be the Gram matrix of $\{\bv_1, \ldots, \bv_n\}$.  There exist constants $C_1, C_2$, and $C_3$, depending only on $n$ and $F$, such that for any embeddings $\sigma$ and $\tau$ of $F$ into $\mathbb R$,
    \begin{enumerate}
    \item $g_{ii}^\sigma \leq C_1\, g_{jj}^\tau$ for any $1 \leq i \leq j \leq n$ \cite[page 288, (4)]{Humbert},

    \item $\vert g_{ik}^\tau \vert \leq C_2\, g_{ii}^\tau$ for any $1 \leq i \leq k \leq n$, following from \cite[page 279, (1)]{Humbert} and (a),

    \item $\det(G)^\tau \leq g_{11}^\tau \cdots g_{nn}^\tau \leq C_3\, \det(G)^\tau$ \cite[Th\'{e}or\`{e}me 4]{Humbert}.
    \end{enumerate}
\end{enumerate}

For a lack of a more suitable terminology, we simply call $\{\bv_1, \ldots, \bv_n\}$ a Humbert linearly independent set in $N$.  The free $\ring$-lattice $\sum \ring \bv_i$ is called a Humbert sublattice of $N$.  As a consequence of (H1) and (H3), there exists a constant $C_4$, depending only on $n$ and $F$, such that
$$\norm(\vol(N)) \leq \norm(Q(\bv_1))\cdots \norm(Q(\bv_n)) \leq C_4\, \norm(\vol(N)).$$
Therefore, there are only finitely many other $\ring$-lattices on the same space containing the Humbert linearly independent set $\{\bv_1, \ldots, \bv_n\}$.

For each $i$, there are only finitely many choices of $\bv_i$ since $\trace(Q)$ is a positive definite quadratic form on $N$ as a $\z$-lattice.  Thus, there are only finitely many Humbert linearly independent sets in $N$.  From now on, we only work with one choice of Humbert linearly independent set for each positive definite integral  $\ring$-lattice (and hence one choice of Humbert sublattice for each such lattice).  However, we would like to make this choice uniform across the entire isometry class in the following sense.  Fix one positive definite $\ring$-lattice $N$ in its isometry class and select a choice of Humbert linearly independent set $\{\bv_1, \ldots, \bv_n\}$ for $N$.  For each lattice $N'$ in $[N]$, choose an isometry $\varphi: N \longrightarrow N'$ and choose $\{\varphi(\bv_1), \ldots, \varphi(\bv_n)\}$ to be the Humbert linearly independent set for $N'$.  In this way, the Gram matrix with respect to the chosen Humbert linearly independent set is the same for every $\ring$-lattice in $[N]$.  We define $\mu_i(N)$ to be $\trace(Q(\bv_i))$ for $1 \leq i \leq n$.  These are invariants of the isometry class $[N]$.   For the sake of convenience, we also define $\overline{\mu}(N)$ to be $\trace(Q(\bv_n))$.

Let $N'$ be another positive definite $\ring$-lattice of rank $n$ with Humbert linearly independent set $\{\bv_1', \ldots, \bv_n'\}$.  We say that $N$ and $N'$ are {\em Humbert-isomorphic} if the canonical $\ring$-module isomorphism from $\sum \ring \bv_i \longrightarrow \sum \ring \bv_i'$, which sends $\bv_i$ to $\bv_i'$ for $1 \leq i \leq n$, extends to an $\ring$-module isomorphism from $N$ to $N'$.  This defines an equivalence relation on the set of $\ring$-lattices of rank $n$.  Because of our way of fixing the Humbert linearly independent set of an $\ring$-lattice, we see that if $N$ and $N'$ are isometric as $\ring$-lattices, then  $N$ and $N'$ are Humbert-isomorphic.

Let
\begin{enumerate}
\item[(H4)] $t$ be the lcm of all the indices $[N' : \sum \ring\bv_i']$, where $N'$ runs through all positive definite $\ring$-lattices of rank $n$, and $\{\bv_1', \ldots, \bv_n'\}$ runs through all Humbert linear independent sets of $N'$.
\end{enumerate}
This integer $t$ exists by virtue of (H1).  Let $\varphi_{N'} : FN' \longrightarrow F^n$ be the isomorphism such that $\varphi_{N'}(\bv_i') = \be_i$, where $\{\be_1, \ldots, \be_n\}$ is the standard basis of $F^n$.  Then, $\varphi_{N'}(N')$ is an $\ring$-module sitting between $\ring^n$ and $t^{-1}\ring^n$.  Thus, there are only finitely many possibilities of $\varphi_{N'}(N')$.  It is clear that $N$ and $N'$ are Humbert-isomorphic if $\varphi_N(N) = \varphi_{N'}(N')$.  This implies that there are only finitely many Humbert-isomorphism classes of positive definite $\ring$-lattices of a fixed rank.

\subsection{The Escalation Process}

Let $\mathscr S$ be an infinite set of classes of positive definite $\ring$-integral lattices of rank $n$.  As is pointed out in Section 2, $\mathscr S$ is a countable set; so we may fix an enumeration of the classes in $\mathscr S$ as $\{[N_1], [N_2], \ldots \}$.   For any positive definite $\ring$-lattice $L$, define
$$N_L: = \begin{cases}
N_j & \mbox{ if $N_i \longrightarrow L$ $\forall\, i < j$ but $N_j \not \longrightarrow L$};\\
\0 & \mbox{ otherwise}.
\end{cases}$$
For each $i \geq 1$, define a set of classes of $\ring$-lattices $\mathscr T_i$ inductively as follows.  Let $\mathscr T_1$ be the set of classes of positive definite integral lattices of rank $n$ which represent $N_1$.  Clearly, $\mathscr T_1$ is a finite set.  For $i \geq 1$, let
$$\mathscr U_i = \{[L] \in \mathscr T_i : N_L \neq \0\}.$$
If $\mathscr U_i$ is empty (which happens when every class in $\mathscr T_i$ is $\mathscr S$-universal), we set $\mathscr T_{i+1} = \mathscr T_i$.  Otherwise, for each $[L] \in \mathscr U_i$, let $\mathscr U(L)$ be the set of classes of positive definite $\ring$-lattices $M$ such that
\begin{enumerate}
\item[(i)] $M$ represents $L$ and $N_L$, and

\item[(ii)] No sublattice of $M$ of strictly smaller rank can represent both $L$ and $N_L$.
\end{enumerate}
Note that all lattices in any class in $\mathscr U(L)$ have rank $\leq n + \rank(L)$.  Moreover,  the ranks of the classes in $\mathscr U(L)$ is strictly greater than the rank of $L$.  Let
$$\mathscr T_{i+1} = \bigcup_{[L] \in \mathscr U_i} \mathscr U(L).$$

\begin{lem}
For every $i \geq 1$, $\mathscr T_i$ is a finite set and every class of $\ring$-lattices in $\mathscr T_i$ represents $N_j$ for all $j \leq i$.
\end{lem}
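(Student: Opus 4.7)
The plan is to proceed by induction on $i$. For the base case $i = 1$, any class $[L] \in \mathscr T_1$ is represented by an integral rank-$n$ overlattice of $N_1$, and such overlattices lie between $N_1$ and its dual lattice $N_1^\#$; since $N_1^\#/N_1$ is finite, there are only finitely many possibilities, so $\mathscr T_1$ is finite. Each class in $\mathscr T_1$ represents $N_1$ by the very definition.

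For the inductive step, assume $\mathscr T_i$ is finite and every $[L] \in \mathscr T_i$ represents $N_j$ for $j \leq i$. The representation assertion for $\mathscr T_{i+1}$ is the easy half: given $[M] \in \mathscr T_{i+1}$, we have $[M] \in \mathscr U(L)$ for some $[L] \in \mathscr U_i$, so $M$ represents both $L$ and $N_L$. By the inductive hypothesis $L$ represents $N_1, \ldots, N_i$, so $N_L = N_k$ for some $k \geq i+1$. If $k = i+1$, then $M$ represents $N_{i+1}$ via $N_L$; if $k > i+1$, then $L$ itself represents $N_{i+1}$ (by the definition of $N_L$), and hence so does $M$. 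Either way, $M$ represents $N_1, \ldots, N_{i+1}$.

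The main obstacle is the finiteness of $\mathscr T_{i+1}$, which reduces via $\mathscr T_{i+1} = \bigcup_{[L] \in \mathscr U_i} \mathscr U(L)$ to showing that each $\mathscr U(L)$ is finite. Here I would use the Humbert reduction machinery to bound the Humbert invariants of any $[M] \in \mathscr U(L)$ uniformly in $L$. Fixing representations $\varphi\colon L \hookrightarrow M$ and $\psi\colon N_L \hookrightarrow M$, condition (ii) in the definition of $\mathscr U(L)$ forces $FM = \varphi(FL) + \psi(FN_L)$, so $\rank(M) \leq \rank(L) + n$. The union of $\varphi$ applied to the fixed Humbert linearly independent set of $L$ and $\psi$ applied to that of $N_L$ yields a spanning set of $FM$ whose trace-norms are all at most $\max(\overline{\mu}(L), \overline{\mu}(N_L))$. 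Applying property (H2) to the Humbert linearly independent set $\{\bw_1, \ldots, \bw_r\}$ of $M$ then bounds each $\trace(Q(\bw_j))$ by this same quantity, and since $M$ is positive definite this bounds every diagonal entry of the corresponding Gram matrix in every real embedding; property (H3)(b) then bounds the off-diagonal entries as well. Since these entries lie in $\ring$ and are bounded in every embedding, only finitely many Gram matrices can arise, and hence, by the finiteness of Humbert-isomorphism classes compatible with a given matrix, $\mathscr U(L)$ is finite.
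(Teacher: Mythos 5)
Your proof is correct and follows essentially the same route as the paper: the key point in both is that condition (ii) in the definition of $\mathscr U(L)$ forces every Humbert invariant of $M$ to be bounded by $\max\{\overline{\mu}(L), \overline{\mu}(N_L)\}$, which together with (H1)--(H3) leaves only finitely many possible Gram matrices and Humbert-isomorphism classes. Your direct spanning-set argument via (H2) is just the contrapositive of the paper's argument with the sublattice $\tilde{K}$ generated by vectors of smaller trace-norm, and your explicit treatment of the (easy) representation assertion and of the base case fills in details the paper leaves to the reader.
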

\begin{proof}
The second assertion of the lemma follows from the definition of $\mathscr T_i$.  As for the first assertion, we already noted that $\mathscr T_1$ is finite.  Suppose that the assertion holds for some $i \geq 1$.  We may assume that $\mathscr U_i$ is not empty.  Let $[L] \in \mathscr U_i$ and $[M] \in \mathscr U(L)$.  Suppose that $\overline{\mu}(M) > \max\{\overline{\mu}(N_L), \overline{\mu}(L)\}$. Let $K$ be the sublattice of $M$ generated by all $\bx \in M$ such that $\trace(Q(\bx)) < \overline{\mu}(M)$, and let $\tilde{K} = FK\cap M$.  Then, $\tilde{K}$ is a primitive sublattice of $M$ of strictly smaller rank, and $\tilde{K}$ represents the Humbert sublattices of $N_L$ and $L$.  So, $\tilde{K}$ must represent $N_L$ and $L$.  But this is impossible as the rank of $\tilde{K}$ is the same as the rank of $K$ which is strictly smaller than the rank of $M$.  Therefore, $\overline{\mu}(M)$ is bounded above by a constant depending only on $L$.  Since there are only finitely many classes of lattices in $\mathscr U_{i}$ as $\mathscr T_{i}$ is finite, there are only finitely many classes of $M$ in $\mathscr T_{i+1}$.
\end{proof}

\begin{lem} \label{lemmar}
Suppose that there exists $r \geq 1$ such that every class of lattices in $\mathscr T_r$ is $\mathscr S$-universal.  Let
$$\mathscr S_0 = \{[N_1]\} \cup \left\{[N_L] :  [L] \in \bigcup_{i = 1}^{r-1} \mathscr U_i \right \}.$$
Then, every $\mathscr S_0$-universal lattice is $\mathscr S$-universal.
\end{lem}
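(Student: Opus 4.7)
The plan is to exploit $\mathscr S_0$-universality of a lattice $M$ in order to construct inductively, for each $i = 1, 2, \ldots, r$, a sublattice $L_i \subseteq M$ whose isometry class lies in $\mathscr T_i$. Once such an $L_r$ is in hand, the hypothesis that every class in $\mathscr T_r$ is $\mathscr S$-universal forces $L_r$ (and hence $M$, which contains $L_r$) to represent every $N_j$. So the task reduces to producing the chain, and the whole argument is a clean induction driven by the inclusion of the ``missing lattices'' $N_{L_i}$ in $\mathscr S_0$.

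For the base case, $[N_1] \in \mathscr S_0$ gives an isometric image $L_1 \subseteq M$ of $N_1$; it has rank $n$ and represents $N_1$, so $[L_1] = [N_1] \in \mathscr T_1$. For the inductive step, suppose $L_i \subseteq M$ with $[L_i] \in \mathscr T_i$ has been constructed. If $N_{L_i} = \0$, then by the definition of $N_L$ the lattice $L_i$ represents every $N_j$, hence $M$ does too, and we are finished. Otherwise $[L_i] \in \mathscr U_i$, so $[N_{L_i}] \in \mathscr S_0$, and $\mathscr S_0$-universality of $M$ yields an isometric embedding $\psi\colon N_{L_i} \hookrightarrow M$. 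The sublattice $L_i + \psi(N_{L_i}) \subseteq M$ represents both $L_i$ and $N_{L_i}$, so the collection of sublattices of $M$ representing both is nonempty; let $L_{i+1}$ be any such sublattice of minimum rank.

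It remains to check $[L_{i+1}] \in \mathscr U(L_i) \subseteq \mathscr T_{i+1}$. Condition (i) of the definition of $\mathscr U(L_i)$ is immediate. For (ii), any sublattice $L' \subseteq L_{i+1}$ of strictly smaller rank is automatically a sublattice of $M$, so if it represented both $L_i$ and $N_{L_i}$ it would contradict the rank-minimality used in choosing $L_{i+1}$ inside $M$. Iterating this procedure at most $r-1$ times either exits early with $N_{L_i} = \0$ for some $i < r$, or produces $[L_r] \in \mathscr T_r$ with $L_r \subseteq M$; in either case $M$ is $\mathscr S$-universal.

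The only step that requires a moment of care is verifying the minimality clause (ii) for $L_{i+1}$: one must pass the minimality from ``sublattices of $M$'' down to ``sublattices of $L_{i+1}$'', which works precisely because $L_{i+1} \subseteq M$. Everything else is bookkeeping, and the finiteness of $\mathscr S_0$ (already established from the finiteness of each $\mathscr T_i$, hence each $\mathscr U_i$) is what makes the statement meaningful in the first place.
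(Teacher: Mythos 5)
Your proof is correct and is essentially the argument the paper intends: the paper simply defers to \cite[Proposition 3.1]{kko}, whose proof is exactly this escalation — inductively producing sublattices $L_i\subseteq M$ with $[L_i]\in\mathscr T_i$ by choosing a rank-minimal sublattice of $M$ representing both $L_i$ and $N_{L_i}$, until one lands in the $\mathscr S$-universal set $\mathscr T_r$. Your care in passing the rank-minimality from sublattices of $M$ to sublattices of $L_{i+1}$ to verify condition (ii) is the right (and only nontrivial) check.
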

\begin{proof}
The proof is the same as that of \cite[Proposition 3.1]{kko}.
\end{proof}

All is left to do is to demonstrate the existence of the integer $r$ in this lemma.  In what follows, $\p$ always stands for a finite prime of $F$.

\begin{lem} \label{localfinite}
Let $\mathscr S(\p)$ be an infinite set of $\ring_\p$-lattices of rank $n$.  Then, there exists a finite subset $\mathscr S_0(\p)$ of $\mathscr S(\p)$ such that any $\ring_\p$-lattice that represents every $\ring_\p$-lattice in $\mathscr S_0(\p)$ must represent all $\ring_\p$-lattices in $\mathscr S(\p)$.
\end{lem}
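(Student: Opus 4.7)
The plan is to reduce the local statement to a Dickson's-lemma argument on the combinatorial invariants of $\ring_\p$-lattices.  By Jordan decomposition (see \cite[\S 91--93]{om}), every $\ring_\p$-lattice $N$ of rank $n$ is governed by a discrete ``shape'' consisting of the ranks of its Jordan components, the isometry classes of their unimodular cores, and, in the dyadic case, the scale--norm--weight invariants of each component, together with the integer scale exponents $e_1 < \cdots < e_k$.  For a fixed rank $n$, the discrete shape takes only finitely many values, so by pigeonhole I may partition $\mathscr S(\p)$ into finitely many subfamilies of common shape; it then suffices to treat each infinite subfamily.  Within such a subfamily, a lattice is specified, up to finitely many ambiguities, by its exponent tuple $(e_1, \ldots, e_k)$ with $e_1 < \cdots < e_k$, and the standing integrality assumption together with unimodularity of the Jordan cores forces $e_1 \geq 0$, placing every exponent tuple in $\n^k$.

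I would then establish a monotonicity principle: within such a subfamily, if $(e_1, \ldots, e_k) \leq (e_1', \ldots, e_k')$ componentwise, then the lattice with exponents $(e_i')$ is represented by the lattice with exponents $(e_i)$.  Realizing both on a common orthogonal sum of quadratic spaces with identical unimodular Jordan cores $L_1, \ldots, L_k$, one has $\pi^{e_i'} L_i \subseteq \pi^{e_i} L_i$ for every $i$, and the componentwise inclusion
$\pi^{e_1'} L_1 \perp \cdots \perp \pi^{e_k'} L_k \hookrightarrow \pi^{e_1} L_1 \perp \cdots \perp \pi^{e_k} L_k$
is a quadratic-form-preserving injection.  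Consequently, any $\ring_\p$-lattice $M$ that represents the exponent-$(e_i)$ lattice also represents the exponent-$(e_i')$ lattice.

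Dickson's lemma now closes the argument: the set of exponent tuples appearing in the subfamily, viewed as a subset of $\n^k$, has only finitely many minimal elements under the componentwise order.  Taking $\mathscr S_0(\p)$ to consist of the $\ring_\p$-lattices corresponding to these minimal tuples, together with all members of the finite shape-classes thrown out at the pigeonhole step, the monotonicity principle guarantees that any $\ring_\p$-lattice $M$ representing every lattice in $\mathscr S_0(\p)$ represents every lattice in $\mathscr S(\p)$.

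The main technical obstacle is the dyadic case, where Jordan decomposition is not unique and one must invoke O'Meara's finer scale--norm--weight machinery of \cite[\S 93]{om} to secure the finite-shape reduction.  This amounts to bookkeeping rather than substance: the essential property required is only that, for a fixed rank $n$, the discrete shape invariants take values in a finite set, and this persists in the dyadic setting, so that the subsequent Dickson argument goes through verbatim.
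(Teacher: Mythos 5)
The paper itself gives no argument here --- it defers entirely to \cite[Lemma~1.5]{hkk} --- so your attempt to supply a self-contained proof is welcome, and your overall strategy (pigeonhole on finitely many discrete Jordan ``shapes'', then a well-quasi-order argument on the scale exponents via Dickson's lemma) is very much in the spirit of what is needed and can be made to work. However, your monotonicity principle is false as stated, and the error is visible in its justification. You claim that if $(e_1,\ldots,e_k)\le(e_1',\ldots,e_k')$ componentwise then the lattice with exponents $(e_i')$ is represented by the one with exponents $(e_i)$, citing the inclusions $\pi^{e_i'}L_i\subseteq\pi^{e_i}L_i$. But the sublattice $\pi^{e}L_i$ carries the quadratic form of $L_i$ scaled by $\pi^{2e}$, not by $\pi^{e}$; you are conflating the sublattice $\pi^{e}L_i$ with the $\p^{e}$-modular Jordan component, i.e.\ with $L_i$ whose form has been rescaled by $\pi^{e}$. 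The simplest counterexample is in rank one: the lattice with Gram matrix $(\pi)$ is \emph{not} represented by the one with Gram matrix $(1)$, since $Q(c\bv)=c^2Q(\bv)$ always changes the order of the value by an even amount. So exponent tuples that differ in parity in some coordinate need not be comparable under representation, and your Dickson step, applied to the raw tuples in $\n^k$, selects a ``minimal'' set that does not dominate the rest.

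The repair is short: include the parity vector $(e_1\bmod 2,\ldots,e_k\bmod 2)$ among the finitely many shape data fixed at the pigeonhole stage. Within a subfamily of fixed ranks, fixed unimodular cores $L_1,\ldots,L_k$, and fixed parities $\epsilon_i$, one does have the correct monotonicity: if $e_i'\ge e_i$ and $e_i'\equiv e_i\Mod{2}$ for all $i$, then the $i$-th component with exponent $e_i'$ is isometric to the form on the sublattice $\pi^{(e_i'-e_i)/2}L_i$ of the component with exponent $e_i$, and the orthogonal sum of these embeddings gives the desired representation; Dickson's lemma applied to the tuples $\bigl((e_1-\epsilon_1)/2,\ldots,(e_k-\epsilon_k)/2\bigr)\in\n^k$ then finishes the argument. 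Your remarks on the dyadic case are fine in substance --- non-uniqueness of the Jordan splitting is harmless since you only need to \emph{choose} one splitting per lattice, and there are indeed only finitely many isometry classes of unimodular $\ring_\p$-lattices of each bounded rank --- but the parity bookkeeping above is needed there just as in the nondyadic case.
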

\begin{proof}
The proof is almost identical to \cite[Lemma 1.5]{hkk}.
\end{proof}

The next two lemmas concern representations of $\ring_\p$-lattices.   Their proofs reply on the Jordan decompositions of these lattices which are discussed in detail in \cite[Chapter IX]{om}.  We recall some of the basic properties of these decompositions used in our discussion.

Let $L$ be an $\ring_\p$-lattice.  Then, $L$ has a Jordan decomposition (or Jordan splitting) $L = L_1 \perp \cdots \perp L_t$ in which every orthogonal summand is modular and $\fs(L_1) \supsetneq \cdots \supsetneq \fs(L_t)$.  For $1 \leq i \leq t$, $L_i$ is called the $\fs(L_i)$-component of that decomposition.  The chain of fractional ideals $\fs(L_1) \supsetneq \cdots \supsetneq \fs(L_t)$ is uniquely determined by $L$. For convenience, we denote $\fs(L_t)$, the scale of the last Jordan component, by $\mathfrak l(L)$.  Since $L^\# = L_t^\# \perp \cdots \perp L_1^\#$ and each $L_i^\#$ is $\fs(L_i)^{-1}$-modular, $L_t^\#\perp \cdots \perp L_1^\#$ is a Jordan decomposition of $L^\#$.  In particular, $\mathfrak s(L^\#) = \mathfrak l(L)^{-1}$ and $\mathfrak l(L)^{-1} \supseteq \mathfrak n(L^\#) \supseteq 2\cdot\mathfrak l(L)^{-1}$.

When $\fs(L) = \ring_\p$, the first component in a Jordan decomposition of $L$ must be unimodular.  We call it the unimodular component of that Jordan decomposition.  In general, $L$ has more than one Jordan decompositions.  But when $\p$ is nondyadic, the isometry classes of the Jordan components are uniquely determined.  In particular, when $\p$ is nondyadic and $\mathfrak s(L) = \ring_\p$, the unimodular components in all Jordan decompositions of $L$ are isometric.  In that case, the unimodular component of any Jordan decomposition of $L$ is simply referred as the unimodular component of $L$.

\begin{lem}
Let $K$ be a nondegenerate $\ring_\p$-lattice and $\ideal$ be an ideal contained in $4\cdot \mathfrak l(K)$.  Then, $K$ contains a full-rank $\ideal$-maximal sublattice.  Consequently, $K$ represents all integral $\ring_\p$-lattices of rank $\leq g$ with norm contained in $4\cdot \mathfrak l(K)$, provided $\rank(K) \geq g + 3$.
\end{lem}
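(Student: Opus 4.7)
My plan has two parts corresponding to the two assertions, with all of the real work in the first. The strategy for the first part is to build the $\ideal$-maximal sublattice by a greedy enlargement argument inside $K$ and then to leverage the factor $8$ in the hypothesis to show that the resulting sublattice is genuinely $\ideal$-maximal on the whole space $F_\p K$, not merely maximal among sublattices of $K$. The second part will then be a formal consequence of the first, combined with the classical local representation theorem that an $\ideal$-maximal $\ring_\p$-lattice of rank $m \geq n+3$ represents every $\ideal$-maximal lattice of rank $n$ on a subspace of its ambient quadratic space.

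For the construction, I would fix a Jordan splitting $K = K_1 \perp \cdots \perp K_t$ with scales $\mathfrak s(K_1) \supsetneq \cdots \supsetneq \mathfrak s(K_t) = \mathfrak l(K)$, so that the hypothesis becomes $\ideal \subseteq 8\,\mathfrak s(K_i)$ for every $i$.  Start from any full-rank sublattice $L_0 \subseteq K$ with $\mathfrak n(L_0) \subseteq \ideal$ (for instance $L_0 = \pi^N K$ for $N$ large), and pick a sublattice $M \subseteq K$ that is maximal, with respect to inclusion, among those containing $L_0$ with $\mathfrak n(M) \subseteq \ideal$; such a maximal element exists because full-rank sublattices with norm in $\ideal$ have volumes uniformly bounded below. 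The crucial step is to show that $M$ is in fact $\ideal$-maximal on $F_\p K$, i.e.\ that any $M'$ on $F_\p K$ with $M \subseteq M'$ and $\mathfrak n(M') \subseteq \ideal$ is automatically contained in $K$. Expanding a hypothetical $\bx \in M' \setminus K$ in a basis adapted to the Jordan splitting of $K$, this reduces to showing that the coordinates of $\bx$ along the Jordan pieces are $\p$-adic integers; the factor $8$ in the hypothesis, which absorbs the norm-scale gap $2\mathfrak s(L) \subseteq \mathfrak n(L)$ together with the 2-adic slack arising from cross-Jordan bilinear products, is exactly what forces this integrality.

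For the second part, let $N$ be an integral $\ring_\p$-lattice of rank $r \leq g$ with $\mathfrak n(N) \subseteq 8\mathfrak l(K)$. Extend $N$ to an $8\mathfrak l(K)$-maximal lattice $\widetilde N$ on $F_\p N$, and let $M$ be the full-rank $8\mathfrak l(K)$-maximal sublattice of $K$ supplied by the first part. Its rank equals $\rank(K) \geq g+3 \geq r+3$, so the classical local representation theorem between maximal $\ring_\p$-lattices \cite{om} furnishes an isometry $\widetilde N \hookrightarrow M$, and composing with $M \subseteq K$ yields the desired representation $N \hookrightarrow K$.

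The main obstacle lies entirely in the dyadic verification for the first part. In odd residue characteristic, $\mathfrak n = \mathfrak s$ holds on Jordan components and $M' \subseteq K$ is essentially immediate from comparing scales component by component. In residue characteristic $2$, however, one must track two independent factors of $2$—one from the norm-scale gap $2\mathfrak s(L) \subseteq \mathfrak n(L)$ and one from the 2-adic contribution of cross-Jordan products—and confirming that the single factor of $8$ in the hypothesis exactly accommodates both is the technical heart of the argument.
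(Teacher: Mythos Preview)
Your second assertion is handled essentially as in the paper: both invoke the standard local representation theorem for $\ideal$-maximal lattices (the paper cites \cite[Lemma~1.1]{hkk} and the remark on \cite[p.~139]{hkk}; you cite \cite{om}). Your extra step of first enlarging $N$ to an $\ideal$-maximal lattice is harmless, and the ``on a subspace'' proviso is automatically satisfied because over a $\p$-adic field any nondegenerate space of dimension $n$ embeds into any nondegenerate space of dimension $\geq n+3$.

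For the first assertion, however, the paper takes a much shorter route than yours. Instead of constructing $M$ by greedy enlargement inside $K$ and then running a coordinate argument in a Jordan basis to show that no overlattice with norm in $\ideal$ can escape $K$, the paper simply invokes \cite[Lemma~5.4]{riehm}: $K$ contains a full-rank $\ideal$-maximal sublattice as soon as $\ideal \subseteq 4\,\mathfrak n(K^{\#})^{-1}$. The only thing left to check is the one-line inclusion $2\,\mathfrak l(K) \subseteq \mathfrak n(K^{\#})^{-1}$, so that $\ideal \subseteq 8\,\mathfrak l(K) \subseteq 4\,\mathfrak n(K^{\#})^{-1}$. This also clarifies the provenance of the constant $8$: it is $4\cdot 2$, where the $4$ is Riehm's constant and the $2$ comes from the norm--scale inequality for $K^{\#}$, rather than the ``two factors of $2$'' bookkeeping you sketch.

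Your plan is not wrong in principle---carried to completion it would amount to a direct reproof of Riehm's lemma---but as written it is incomplete: you yourself flag the dyadic verification as ``the technical heart of the argument'' and do not carry it out. That case is genuinely delicate (Riehm's own treatment is not short), and the structure of the maximal-in-$K$ sublattice $M$ relative to the Jordan splitting of $K$ must be controlled before the coordinate argument can go through. If your goal is to prove the lemma, citing Riehm is by far the more efficient route; if your goal is a self-contained argument, you still owe the dyadic computation.
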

\begin{proof}
By \cite[Lemma 5.4]{riehm}, whose proof is for dyadic $\p$ but remains valid for the nondyadic case, $K$ contains an $\ideal$-maximal sublattice on $F_\p K$ as long as $\ideal \subseteq 4\cdot \mathfrak n(K^\#)^{-1}$.  Since $\mathfrak n(K^\#) \subseteq \mathfrak l(K)^{-1}$, therefore
$4\cdot \mathfrak n(K^\#)^{-1} \supseteq 4\cdot\mathfrak l(K)$.   This proves the first assertion.  The rest of the lemma follows from \cite[Lemma 1.1]{hkk} and the remark on \cite[Page 139]{hkk}.
\end{proof}

\begin{lem}
Let $L$ be a nondegenerate integral $\ring_\p$-lattice of rank $\ell \geq 4n + 6$ and $K$ be a primitive sublattice of $L$ of rank $\geq 3n + 6$.  Then, $K$ must represent all integral $\ring_\p$-lattices of rank $n$ with norm contained in $4\cdot \mathfrak l(L)$.
\end{lem}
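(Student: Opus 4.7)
Let $N$ be an integral $\ring_\p$-lattice of rank $n$ with $\mathfrak n(N)\subseteq 8\mathfrak l(L)$; I aim to embed $N$ isometrically into $K$. The plan is to pass from $K$ to a sublattice $K^\star$ whose last Jordan scale satisfies $\mathfrak l(K^\star)\supseteq\mathfrak l(L)$ and whose rank is at least $n+3$, then invoke the preceding lemma on $K^\star$ with $g=n$: the chain $\mathfrak n(N)\subseteq 8\mathfrak l(L)\subseteq 8\mathfrak l(K^\star)$ produces a representation $N\hookrightarrow K^\star\hookrightarrow K$.

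To construct $K^\star$, fix a Jordan decomposition $K=K^{(1)}\perp\cdots\perp K^{(t)}$ with $\mathfrak s(K^{(1)})\supsetneq\cdots\supsetneq\mathfrak s(K^{(t)})$, and split $K=K^\star\perp K^{\text{fine}}$, where $K^\star$ gathers those $K^{(i)}$ with $\mathfrak s(K^{(i)})\supseteq\mathfrak l(L)$ and $K^{\text{fine}}$ consists of the remaining (strictly finer) components. Then $\mathfrak l(K^\star)\supseteq\mathfrak l(L)$ is immediate from the construction, and what remains is the rank bound $\rank(K^{\text{fine}})\leq 2n+3$, which forces $\rank(K^\star)\geq(3n+6)-(2n+3)=n+3$.

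The rank bound on $K^{\text{fine}}$ is the crux. Writing $\mathfrak l(L)=\p^{e_s}$, the lattice $K^{\text{fine}}$ satisfies $B(K^{\text{fine}},K^{\text{fine}})\subseteq\p^{e_s+1}$; primitivity of $K$ in $L$ yields the identity $K\cap\p^{e_s+1}L=\p^{e_s+1}K$, which in turn embeds $K^{\text{fine}}/\p^{e_s+1}K^{\text{fine}}$ as a totally isotropic $\ring_\p/\p^{e_s+1}$-submodule of $L/\p^{e_s+1}L$ under the reduced bilinear form. A Witt-index estimate on each Jordan component of the decomposition $L=L_1\perp\cdots\perp L_s$, combined with the hypothesis $\ell\geq 4n+6$, is then designed to cap the rank of any such totally isotropic submodule at $2n+3$. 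The principal obstacle is this Witt-index estimate in the dyadic setting: the reduction modulo $\p^{e_s+1}$ carries an irregular quadratic structure on the last Jordan component of $L$, and the factor $8$ in $8\mathfrak l(L)$ must be used to absorb the discrepancy between scale and norm on that component, paralleling the careful treatment in \cite[Lemma~1.1]{hkk} and the surrounding remarks.
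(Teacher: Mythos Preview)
Your overall strategy---splitting $K = K^\star \perp K^{\text{fine}}$ according to whether the Jordan scales lie above or strictly below $\mathfrak l(L)$, then bounding $\rank K^{\text{fine}}$---is exactly what the paper does (its $K_-$ and $K_+$). The divergence, and the gap, is in the bound you claim for the fine part.

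You assert that the totally isotropic embedding of $K^{\text{fine}}/\p^{e_s+1}K^{\text{fine}}$ into $L/\p^{e_s+1}L$, ``combined with the hypothesis $\ell\ge 4n+6$,'' forces $\rank K^{\text{fine}}\le 2n+3$. This cannot work: the integer $n$ enters the hypotheses only through the \emph{lower} bound $\ell\ge 4n+6$, and a lower bound on $\ell$ cannot produce an \emph{upper} bound on the Witt index of $L/\p^{e_s+1}L$ in terms of $n$. Any isotropic--subspace estimate yields at best $\rank K^{\text{fine}}\le \ell/2$, and $\ell/2$ exceeds $2n+3$ whenever $\ell>4n+6$. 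Concretely, a split unimodular $L$ of rank $\ell=4n+8$ already has totally isotropic subspaces of rank $2n+4$ in its reduction mod $\p$. The dyadic issues you flag are a red herring; the estimate fails already for nondyadic $\p$.

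The paper obtains precisely the bound $r:=\rank K_+\le \ell/2$ by a short linear-algebra argument: extend a basis of $K_+$ to a basis of $L$; if $2r>\ell$ the $r\times(\ell-r)$ block $(B(\bx_i,\by_j))$ has $F_\p$-dependent rows, yielding a primitive $\bv\in L$ with $B(\bv,L)\subseteq\p^{m_0+1}$, which contradicts \cite[82:17]{om}. From $r\le\ell/2$ the paper concludes via
\[
\rank K_- = \rank K - r \;\ge\; (\ell-n)-r \;\ge\; \tfrac{\ell}{2}-n \;\ge\; n+3.
\]
Note that this chain tacitly uses $\rank K\ge \ell-n$, which is stronger than the stated hypothesis $\rank K\ge 3n+6$ once $\ell>4n+6$, but does hold in the only application (Lemma~\ref{q}), where $K$ is the orthogonal complement in $tL$ of a sublattice of rank at most $n$. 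Your attempt to sidestep this by squeezing out $r\le 2n+3$ directly is exactly what does not go through.
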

\begin{proof}
Write $K$ as $K = K_0\perp \cdots \perp K_t$, where $\mathfrak s(K_i) = \p^i$ or $K_i = \0$.   For the sake of convenience, let $m_0 = \ord_\p(\mathfrak l(L))$.  As in \cite[Lemma 2.2]{kko}, we define $K_{+}$ and $K_{-}$ as
$$K_{-} = K_0 \perp \cdots \perp K_{m_0}, \quad K_{+} = K_{m_0 + 1} \perp \cdots \perp K_t.$$
Both $K_{-}$ and $K_{+}$ are primitive sublattices of $L$.  Since $\mathfrak l(K_{-}) \supseteq \mathfrak l(L)$, it suffices show that $\rank(K_{-}) \geq n + 3$ by virtue of the last lemma.

Let $r$ be the rank of $K_{+}$, and $\{\bx_1, \ldots, \bx_r\}$ be a basis of $K_{+}$.  Extend $\{\bx_1, \ldots, \bx_r\}$ to a basis $\{\bx_1, \ldots, \bx_r, \by_1, \ldots, \by_{\ell - r}\}$ of $L$.

Assume for the contradiction that $2r > \ell$.  Then, $\ell - r < r$ and the rows of the $r\times (\ell - r)$ matrix $(B(\bx_i, \by_j))$ must be linearly dependent over $F_\p$.  Thus, there must be a primitive row vector $(a_1, \ldots, a_r)$ of $\ring_\p^r$ such that
$$(a_1, \ldots, a_r)\cdot (B(\bx_i, \by_j)) = \0.$$
Let $\bv = a_1\bx_1 + \cdots + a_r\bx_r$.  Then, $B(\bv, L) = \{0\}$ mod $\p^{m_0 + 1}$.  However, since $\bv$ is a primitive vector of $K_{+}$ which is a primitive sublattice of $L$, $\bv$ is also a primitive vector of $L$.  Since $\mathfrak l(L) = \p^{m_0}$,  $L$ has a Jordan decomposition $L = L_1 \perp \cdots \perp L_t$ with $\mathfrak s(L_i) \supseteq \p^{m_0}$ for all $i$.  Then, $\bv = \bv_1 + \cdots + \bv_t$ with $\bv_i \in L_i$ for all $i$ and $\bv_{j}$ primitive in $L_j$ for at least one $j$.  As a result, $B(\bv, L) \supseteq B(\bv, L_j) = B(\bv_j, L_j) = \mathfrak s(L_j) \supseteq \p^{m_0}$ by virtue of \cite[82:17]{om}. This is a contradiction.

So, we must have $2r \leq \ell$ and
$$\rank(K_{-}) = \rank(K) - r \geq \ell - n - r \geq \frac{\ell}{2} - n \geq n + 3$$
which is what we need.
\end{proof}

\begin{lem} \label{q}
Let $L$ be a positive definite $\ring$-lattice of rank $\geq 4n + 6$, and $t$ be a positive integer.  There exists an ideal $\qq$ of $\ring$, depending only on $tL$ with the following properties: let $N$ be a lattice of rank $n$, $M$ be a lattice which represents both $N$ and $L$, and $\tau: L \longrightarrow M$ and $\sigma: N \longrightarrow M$ be representations.  Then the genus of $\sigma(N)^\perp\cap \tau(tL)$, where $\sigma(N)^\perp$ is the orthogonal complement of $\sigma(N)$ in $M$, represents all positive definite $\ring$-lattices of rank $n$ with norm contained in $\qq$.
\end{lem}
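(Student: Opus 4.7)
The plan is to set $P := \sigma(N)^\perp \cap \tau(tL)$ and show, prime by prime, that $P$ represents locally every integral $\ring_\p$-lattice of rank $n$ whose norm lies in $8 \cdot \mathfrak l(tL_\p)$, using the preceding lemma. This then forces the global conclusion once $q$ is chosen to control the local norm at each finite prime uniformly.

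First I would record two structural facts about $P$. It is primitive in $\tau(tL)$: if $\alpha \bx \in P$ for some $0 \neq \alpha \in F$ and $\bx \in \tau(tL)$, then $B(\alpha \bx, \sigma(N)) = 0$ forces $B(\bx, \sigma(N)) = 0$, so $\bx \in P$. A dimension count inside $FM = F\sigma(N) \oplus F\sigma(N)^\perp$ yields $\rank(P) \geq \rank(L) - n \geq 3n+6$. Both primitivity and rank are preserved by localization, so at every finite prime $\p$ the lattice $P_\p$ is a primitive sublattice of $\tau(tL)_\p$ of rank $\geq 3n+6$.

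Next I would invoke the preceding lemma locally. Because $\tau$ is an isometric embedding, $\tau(tL)_\p \cong tL_\p$ as $\ring_\p$-lattices; in particular $\tau(tL)_\p$ is integral of rank $\geq 4n+6$ and $\mathfrak l(\tau(tL)_\p) = \mathfrak l(tL_\p)$. The preceding lemma then gives that $P_\p$ represents every integral $\ring_\p$-lattice of rank $n$ whose norm lies in $8 \cdot \mathfrak l(tL_\p)$.

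Finally, I would choose $q$ as a positive integer satisfying $q\ring_\p \subseteq 8 \cdot \mathfrak l(tL_\p)$ at every finite prime $\p$. Since $tL$ is unimodular outside a finite set of primes determined by $tL$, this reduces to finitely many divisibility conditions coming from the primes above $2$ and from the primes dividing $\vol(tL)$, so such a $q$ depending only on $tL$ can be selected. Given any positive definite $\ring$-lattice $N'$ of rank $n$ with $\mathfrak n(N') \subseteq q\ring$, we have $\mathfrak n(N'_\p) \subseteq 8 \cdot \mathfrak l(tL_\p)$ at every finite $\p$, so $N'_\p$ is represented by $P_\p$. At each archimedean place, positive-definiteness of $M$ (inherited by $P$) combined with $\rank(P) \geq 3n+6 > n$ ensures that the positive definite $N'$ embeds there. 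Hence $N'$ is represented by the genus of $P$, which is exactly the desired conclusion. The step requiring the most care is the uniform choice of $q$: the key observation is the identity $\mathfrak l(\tau(tL)_\p) = \mathfrak l(tL_\p)$, which makes the local bound in the preceding lemma independent of $M$, $\sigma$, and $\tau$; everything else is bookkeeping.
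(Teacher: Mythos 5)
Your proof is correct and follows essentially the same route as the paper: reduce to the preceding local lemma by exhibiting a primitive sublattice of $\tau(tL)_\p$ of rank $\geq 3n+6$ inside $\sigma(N)^\perp\cap\tau(tL)$, then choose $q$ so that $q\ring_\p\subseteq 8\,\mathfrak l(tL_\p)$ for all $\p$. The only (harmless) difference is that you verify directly that $P=\sigma(N)^\perp\cap\tau(tL)$ is itself primitive in $\tau(tL)$ of rank $\geq 3n+6$, whereas the paper instead produces a sublattice of $P$ as the orthogonal complement in $tL$ of the projection of a scaled copy of $N$; both feed into the same key lemma.
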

\begin{proof}
Without loss of generality, we may assume that both $N$ and $L$ are sublattices of $M$, and that $\sigma$ and $\tau$ are the natural inclusions.  Let $\pr : FM \longrightarrow FL$ be the orthogonal projection from $FM$ onto $FL$.  Take an nonzero element $\alpha$ of $\ring$ such that $\tilde{N} : = \pr(\alpha N)$ is a sublattice of $tL$.  Let $K$ be the orthogonal complement of $\tilde{N}$ in $tL$, which is a primitive sublattice of $tL$ of rank $\geq 3n + 6$.

Let $\bv$ be a vector in $K$, and $\bx$ a vector in $N$.  Then, $\by: = \pr(\alpha \bx)$ is in $\tilde{N}$, and $\alpha \bx = \by + \bz$ for some $\bz$ orthogonal to $FL$.  Clearly, $B(\bv, \by) = B(\bv, \bz) = 0$.  Therefore, $B(\bv, \alpha \bx) = 0$; hence $B(\bv, \bx) = 0$ as well.  This shows that $K \subseteq N^\perp$.

Let
$$\qq = \prod_{\p} \p^{\ord_\p(4\cdot\mathfrak l(tL_\p))}.$$
This ideal $\qq$ is well-defined because $\ord_\p(4\cdot\mathfrak l(tL_\p))$ is 0 for almost all $\p$.  The conclusion of the lemma now follows from the last lemma.
\end{proof}

\subsection{The Finiteness Theorem}

We are now ready to complete the proof of number-field analog of the main theorem in \cite{kko}.

\begin{thm} \label{finite}
Let $\mathscr S$ be an infinite set of isometry classes of positive definite $\ring$-lattices of a fixed rank.  Then, there exists a finite subset $\mathscr S_0$ of $\mathscr S$ such that if a positive definite $\ring$-lattice represents all classes of lattices in $\mathscr S_0$, then it represents all classes of lattices in $\mathscr S$.
\end{thm}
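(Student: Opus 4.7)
The plan is to invoke Lemma~\ref{lemmar}: it suffices to prove that there exists an integer $r \geq 1$ such that every class in $\mathscr T_r$ is $\mathscr S$-universal. I would proceed by contradiction. If no such $r$ exists, then for each $i \geq 1$ one can choose $[L_i] \in \mathscr U_i$ and $[L_{i+1}] \in \mathscr U(L_i)$, producing an infinite escalation chain $[L_1], [L_2], \ldots$ with $N_{L_i} \neq \0$ for all $i$. The goal is to contradict the infinitude of this chain.

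The first step is a uniform rank bound. Once $\rank(L_i) \geq 4n+6$, Lemma~\ref{q} applied to $L = L_i$, $M = L_{i+1}$, and $N = N_{L_i}$ yields an integer $q = q(L_i)$ such that the genus of $\sigma(N_{L_i})^\perp \cap \tau(tL_i)$ inside $L_{i+1}$ represents every positive definite rank-$n$ $\ring$-lattice whose norm lies in $q\ring$. Combined with condition (ii) in the definition of $\mathscr U(L_i)$ and a Hsia--Kitaoka--Kneser type passage from genus to class in the large-rank regime, this forces any later $N_{L_j}$ in the chain to have nontrivial data at one of the finitely many primes dividing $q$. The escalation bookkeeping borrowed from \cite{kko} then shows that only finitely many further escalations are possible before one would be forced to have $N_{L_j} = \0$, contradicting $[L_j] \in \mathscr U_j$. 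Hence there is a uniform bound $\rank(L_i) \leq R$ for all $i$.

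With the ranks bounded, the inequality $\overline{\mu}(L_{i+1}) \leq \max\{\overline{\mu}(L_i), \overline{\mu}(N_{L_i})\}$ (established in the proof preceding Lemma~\ref{lemmar}) together with induction also bounds $\overline{\mu}(L_i)$ and $\overline{\mu}(N_{L_i})$ uniformly. Humbert's properties (H1)--(H3) then bound the Gram matrices of the Humbert sublattices of the $L_i$ and $N_{L_i}$, while (H4) and the discussion following it give a uniform $t$ such that the Humbert sublattice has index dividing $t$ in its ambient lattice. Consequently, only finitely many Humbert-isomorphism classes arise, and at every prime $\p$ outside a fixed finite set $S$ (determined by the volumes and $t$), the local completions $(L_i)_\p$ and $(N_{L_i})_\p$ each fall into only finitely many local isometry classes.

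Finally, at the primes in $S$ I would apply Lemma~\ref{localfinite} to the sets $\{(L_i)_\p\}$ and $\{(N_{L_i})_\p\}$ and extract finite criterion subsets; after passing to a subsequence, I may assume that all $(L_i)_\p$ are mutually isometric and all $(N_{L_i})_\p$ are mutually isometric at every prime $\p$. Then all $L_i$ lie in a single genus and all $N_{L_i}$ lie in a single genus, each containing only finitely many isometry classes, so the infinite chain $\{[L_i]\}$ in fact takes only finitely many values, the desired contradiction. The main obstacle will be the rank-bounding step, because running the \cite{kko} argument through the scaled lattice $tL_i$ rather than $L_i$ itself requires carefully comparing the minimality clause in $\mathscr U(L_i)$ with the $q\ring$ norm threshold produced by Lemma~\ref{q}, and one must ensure that lifting local representations from a Humbert sublattice back up to the ambient $\ring$-lattice (an issue that does not arise in the $\z$-case, where Minkowski reduction already produces a basis) does not destroy the collapse mechanism that terminates the escalation.
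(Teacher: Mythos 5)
Your proposal has a genuine gap, and it sits exactly where you flag ``the main obstacle'': the uniform rank bound $\rank(L_i)\leq R$ is not only unproved but is the opposite of what actually happens. Under the contradiction hypothesis the escalation towers $\mathscr T_i$ have ranks that grow without bound (each $M\in\mathscr U(L)$ must represent both $L$ and an $N_L$ that $L$ fails to represent, so rank strictly increases along the chain, up to $n$ at a time), and the paper's proof \emph{exploits} this growth rather than preventing it: once $\rank \geq 4n+6$ all local obstructions disappear and the Hsia--Kitaoka--Kneser theorem gives global representation of every $[N]\in\mathscr S$ with $\mu_1(N)$ large. Your claim that Lemma~\ref{q} plus ``escalation bookkeeping'' forces $N_{L_j}=\0$ after finitely many steps is circular --- the termination of the escalation is precisely the statement being proved --- and the real difficulty is that even with local representability everywhere, lattices in $\mathscr S$ with \emph{small} successive minima can still fail to be represented; that is what keeps the chain alive and what the main argument must kill. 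Your second paragraph also fails on its own terms: the inequality $\overline{\mu}(L_{i+1})\leq\max\{\overline{\mu}(L_i),\overline{\mu}(N_{L_i})\}$ cannot bootstrap a uniform bound on $\overline{\mu}(L_i)$, because the $N_{L_i}$ run through the infinite set $\mathscr S$ of fixed-rank classes, whose invariants $\overline{\mu}$ are necessarily unbounded; so the subsequent ``finitely many genera, pigeonhole'' conclusion never gets off the ground.

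What is missing is the technical heart of the paper's proof: an induction on the index $i$ of the Humbert invariants $\mu_i$. One shows that for each $i$ there are $s_i$ and $c_i$ such that every class in $\mathscr T_{s_i}$ represents all $[N]\in\mathscr S$ with $\mu_i(N)>c_i$. The base case $i=1$ is \cite{hkk} (after arranging, via Lemma~\ref{localfinite} and the growth of unimodular components, that all local representations exist). For the inductive step, the exceptional set $\mathscr S_i$ has bounded entries $g_{jk}$ for $j\leq i$, is split into finitely many Humbert-isomorphism classes, and is further partitioned by the congruence $B(\bv_j,\bv_k)\equiv B(\bv_j',\bv_k')\pmod q$ on the remaining Gram entries; Lemma~\ref{q} then lets one represent the positive definite difference matrix $A$ by $\overline{N}^\perp\cap tL$ and glue $\bv_j\mapsto\overline{\bv}_j+\bw_j$ into a representation of $N$ (the scaling by $t$ being exactly what lets one descend from the Humbert sublattice to $N$ itself). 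After $n$ steps every class in $\mathscr T_{s_n}$ represents all but finitely many classes of $\mathscr S$, and a few more escalations give the $\mathscr S$-universal tower $\mathscr T_r$, contradicting the hypothesis. None of this mechanism appears in your proposal, so as written it does not constitute a proof.
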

\begin{proof}
It suffices to show that an integer $r$ satisfying the hypothesis of Lemma \ref{lemmar} does exist.  Assume to the contrary that no such integer exists.  Then, for every positive integer $i$, the set $\mathscr U_i$ is nonempty, and the ranks of all the classes in $\mathscr T_{i+1}$ is strictly greater than the ranks of all the classes in $\mathscr T_{i}$.  Therefore, there must be a positive integer $s$ such that the ranks of all the classes in $\mathscr T_s$ are $\geq n + 3$.

Let $P$ be the set of all finite primes $\p$ of $F$ such that
\begin{enumerate}
\item[(i)] $P$ contains all the dyadic primes and all the prime divisors of $t$, where $t$ is the integer defined in (H4) for rank $n$.

\item[(ii)] $L_\p$ is unimodular for all $\p \not \in P$ and all $L \in \mathscr T_s$.
\end{enumerate}
Clearly, this $P$ is a finite set.

By Lemma \ref{localfinite}, there are finitely many classes $[N^{(j)}]$ in $\mathscr S$ such that for each $\p \in P$, if an $\ring_\p$-lattice represents $N_\p^{(j)}$ for all $j$, then it represents $N_\p$ for all $[N] \in \mathscr S$.  We take an integer $s_1 \geq s$ such that
\begin{enumerate}
\item[(iii)] the ranks of all the classes of lattices in $\mathscr T_{s_1}$ are $\geq 4n + 6$,

\item[(iv)] $N_\p \longrightarrow L_\p$ for all $\p \in P$, all $[N] \in \mathscr S$, and all $[L] \in \mathscr T_{s_1}$,

\item[(v)] $L_\p$ has a unimodular component of rank $\geq n + 3$ for all $\p \not \in P$ and all $[L] \in \mathscr T_{s_1}$.
\end{enumerate}
Condition (v) is a consequence of condition (ii) and the fact that every class of lattices in $\mathscr T_{s+1}$ represents some classes of lattices in $\mathscr T_{s}$.

For every finite $\p \not \in P$, $\p$ is nondyadic and $L_\p$ has a unimodular component of rank $\geq n+3$ for every $[L] \in \mathscr T_{s_1}$.  By \cite[Theorem 1]{omr}, that unimodular component alone represents all $\ring_\p$-lattices of rank $n$.  Therefore, conditions (iv) and (v) together ensure that for each finite prime $\p$ of $F$,  $N_\p$ is represented by $L_\p$ for all $[N] \in \mathscr S$ and all $[L] \in \mathscr T_{s_1}$.  By \cite[Theorem 3]{hkk}, there exists $c_1 > 0$ such that every class of lattices in $\mathscr T_{s_1}$ represents all classes $[N]$ in $\mathscr S$ provided $\mu_1(N) > c_1$.  For each $[L] \in \mathscr T_{s_1}$, let $\qq(tL)$ be the ideal of $\ring$ satisfying Lemma \ref{q} for the lattice $L$ and the integer $t$, and let
$$\qq: = \prod_{[L] \in \mathscr T_{s_1}} \qq(tL).$$

Let $1 \leq i \leq n-1$.  Suppose that there exists $c_i \geq c_1$ and $s_i \geq s_1$ such that every class of lattices $[M]$ in $\mathscr T_{s_i}$ represents all $[N] \in \mathscr S$ provided $\mu_i(N) > c_i$.  Let
$$\mathscr S_i: = \{[N] \in \mathscr S : N \not \longrightarrow M \mbox{ for some } [M] \in \mathscr T_{s_i}\}.$$
Then, for every  $[N] \in \mathscr S_i$, $\mu_i(N) \leq c_i$.  So, the Gram matrix $(g_{ij})$ of the Humbert linearly independent set of $N$ satisfies
$$\trace(g_{11}) \leq \cdots \leq \trace(g_{ii}) \leq c_i.$$
Since every $g_{jj}$ is totally positive, we must have only finitely many possibilities of $g_{11}, \ldots, g_{ii}$.  By (H3), we see that there are also finitely many possibilities of $g_{jk}$ for any $1 \leq j \leq i$ and $1 \leq k \leq n$.  So, $\mathscr S_i$ is partitioned into finitely many disjoint parts in such a way that if $\{\bv_1, \ldots, \bv_n\}$ and $\{\bv_1', \ldots, \bv_n'\}$ are the Humbert linearly independent sets of two lattices from different classes in the same part, then their Gram matrices will satisfy
$$B(\bv_j , \bv_k) = B(\bv_j', \bv_k') \quad \mbox{ for } 1\leq j \leq i \mbox{ and } 1\leq k \leq n.$$
Each of these parts is the disjoint union of finitely many subsets such that all the lattices in the classes within any of these subsets are Humbert isomorphic.  Let $\mathscr X$ be one of these subsets.  Let $[N]$ and $[N']$ be two classes in $\mathscr X$, and suppose $\{\bv_1, \ldots, \bv_n\}$ and $\{\bv_1', \ldots, \bv_n'\}$ are the Humbert linearly independent sets of $N$ and $N'$, respectively.  We write $[N] \sim [N']$ when
$$B(\bv_j, \bv_k) \equiv B(\bv_j', \bv_k') \mod \qq \quad \mbox{ for } i+1 \leq j \leq k \leq n.$$
Then, $\sim$ is an equivalence relation on $\mathscr X$, and the resulting quotient set has only finitely many equivalence classes.  Let $\mathscr W$ be a complete set of representatives of these equivalence classes, which is a finite set, and let $a = a(\mathscr X)$ be an integer $\geq s_i$ such that every class of lattices in $\mathscr T_a$ represents all classes of lattices in $\mathscr W$.

Let $[N] \in \mathscr X$ and $[\overline{N}]$ be the class in $\mathscr W$ such that $[N] \sim [\overline{N}]$, and let $\{\bv_1, \ldots, \bv_n\}$ and $\{\overline{\bv}_1, \ldots, \overline{\bv}_n\}$ be the Humbert linearly independent sets of $N$ and $\overline{N}$, respectively.  Let $A$ be the lattice corresponding to the $(n-i)\times (n-i)$ symmetric matrix $(B(\bv_j, \bv_k)) - (B(\overline{\bv}_j, \overline{\bv}_k))$, where $j, k \in \{i+1,  \ldots, n\}$.  When $\mu_{i+1}(N)$ is sufficiently large,  $A$ would be positive definite of rank $n - i$.

Let $[M] \in \mathscr T_a$.  Then, $M$ represents $\overline{N}$ and $L$ for all $[L] \in \mathscr T_{s_1}$.  We may assume that both $\overline{N}$ and $L$ are sublattices of $M$ and that the representations of $\overline{N}$ and $L$ into $M$ are the natural inclusions.   When $\mu_{i+1}(N)$ is sufficiently large, $A$ is positive definite and is represented by $\overline{N}^\perp \cap tL \subseteq \overline{N}^\perp \cap tM$ by Lemma \ref{q} and \cite[Theorem 3]{hkk}.  Here, $\overline{N}^\perp$ is the orthogonal complement of $\overline{N}$ in $M$.   In that case, there exist $\bw_{i+1}, \ldots, \bw_n$ in $\overline{N}^\perp \cap tM$ such that
$$(B(\bw_j, \bw_k)) = B(\bv_j, \bv_k)) - (B(\overline{\bv}_j, \overline{\bv}_k)).$$
We set $\bw_j = \0$ for $j = 1, \ldots, i$.   Then,  the map $\varphi: \sum \ring \bv_j \longrightarrow M$ defined by
$$\bv_j \longmapsto \overline{\bv}_j + \bw_j, \quad j = 1, \ldots, n$$
is a representation from $\sum \ring \bv_j$ into $M$.  Take $\bv \in N$.  Then, $t\bv$ is in $\sum \ring\bv_j$, hence $\varphi(t\bv) = \overline{\bv} + \bw$ where $\overline{\bv} \in \sum \ring\overline{\bv}_j$ and $\bw \in \overline{N}^\perp \cap tM$.  Since $[N]$ and $[\overline{N}]$ are Humbert-isomorphic, $t^{-1}\overline{\bv}$ must be in $\overline{N}$.  As for $\bw$, it is in $tM$ and hence $t^{-1}\bw$ is in $M$.  Therefore, $\varphi(t\bv)$ is in $tM$, rendering $\varphi(\bv)$ in $M$.  Consequently, $\varphi$ is a representation from $N$ into $M$.

We have demonstrated that there exists a positive integer $c(\mathscr X)$ such that if $[N] \in \mathscr X$ and $\mu_{i+1}(N) > c(\mathscr X)$, then $N$ is represented by all classes of lattices in $\mathscr T_a$.  Since $\mathscr S_i$ is the disjoint union of finitely many these $\mathscr X$, we may conclude that there exist positive integers $s_{i+1} \geq s_i$ and $c_{i+1}$ such that every class of lattices in $\mathscr T_{s_{i+1}}$ represents all $[N] \in \mathscr S$ provided $\mu_{i+1}(N) > c_{i+1}$.   We may continue this argument until we obtain integers $s_n$ and $c_n$ such that every class of lattices in $\mathscr T_{s_n}$ represents all classes $[N] \in \mathscr S$ provided $\overline{\mu}(N) > c_n$.  This implies that every class of lattices in $\mathscr T_{s_n}$ represents all but finitely many classes in $\mathscr S$.  So, we could find a large enough integer $r$ such that every class of lattices in $\mathscr T_r$ is $\mathscr S$-universal.  This is a contradiction.
\end{proof}

A quadratic form $f$ in $n$ variables over $F$ is {\em integer-valued} if $f(\ring^n) \subseteq \ring$.  If $f$ is positive definite and $f(\ring^n)$ is the set of totally positive integers in $F$, then $f$ is called universal.  It was proved in \cite{bh} that a positive definite integer-valued quadratic form over $\mathbb Q$ is universal if it represents every positive integers up to 290.  The number-field extension of this result has been sought after in some recent work on universal quadratic forms (see, for example, \cite{k} and \cite{ky}).  More precisely, they seek a finite set $\mathscr K_0$ of totally positive integers of $F$ such that a positive definite integer-valued quadratic form over $F$ must be universal if it represents all elements of $\mathscr K_0$.  We call such a set $\mathscr K_0$ a finite universality  criterion set for $F$.
The following is an immediate consequence of Theorem \ref{finite}.

\begin{cor}
Let $F$ be a totally real number field.  There is a finite universality  criterion set for $F$.
\end{cor}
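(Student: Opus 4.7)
The plan is to recast universality of an integer-valued quadratic form as the representation of an infinite family of rank-one $\ring$-lattices, and then invoke Theorem \ref{finite}. For each totally positive integer $a \in \ring$, let $L_a = \ring v_a$ denote the rank-one $\ring$-lattice with $Q(v_a) = a$; this is positive definite and integral, with scale $a\ring \subseteq \ring$. Two such lattices $L_a$ and $L_b$ are isometric exactly when $a = b u^2$ for some $u \in \ring^\times$, so their isometry classes are parametrized by totally positive integers modulo squares of units. Because $\ring^\times / \ring^{\times 2}$ is finite (by Dirichlet's unit theorem together with the fact that the torsion subgroup of $\ring^\times$ is $\{\pm 1\}$) while $F$ has infinitely many totally positive integers, the set
\[
\mathscr S := \{[L_{2a}] : a \in \ring \text{ totally positive}\}
\]
is infinite.

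Applying Theorem \ref{finite} to $\mathscr S$ produces a finite subset $\mathscr S_0 = \{[L_{2a_1}], \ldots, [L_{2a_k}]\}$ of $\mathscr S$ with the property that any positive definite $\ring$-lattice representing every class in $\mathscr S_0$ must represent every class in $\mathscr S$. Set $\mathscr K_0 := \{a_1, \ldots, a_k\}$, a finite set of totally positive integers of $F$.

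To confirm that $\mathscr K_0$ is a finite universality criterion set for $F$, let $f$ be an arbitrary positive definite integer-valued quadratic form over $F$ in $m$ variables, and consider the $\ring$-lattice $M := (\ring^m, 2f)$. Since $f$ is integer-valued, $2f$ takes values in $2\ring$ and $M$ has scale in $\ring$, so $M$ satisfies the integrality hypothesis built into Theorem \ref{finite}. For any totally positive $a \in \ring$, $M$ represents $L_{2a}$ if and only if there exists $\bx \in \ring^m$ with $2f(\bx) = 2a$, equivalently $f(\bx) = a$. Hence $f$ represents every element of $\mathscr K_0$ if and only if $M$ represents every class in $\mathscr S_0$; by the defining property of $\mathscr S_0$, this in turn forces $M$ to represent every class in $\mathscr S$, i.e., $f$ represents every totally positive integer of $F$, which is the universality of $f$.

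The only real subtlety is the mismatch between the scale-integrality assumption built into Theorem \ref{finite} and the mere norm-integrality captured by the phrase ``integer-valued''; the doubling device $f \mapsto 2f$ on the side of the test lattice resolves this cleanly without requiring any modification to Theorem \ref{finite}, so no genuine obstacle remains in the argument.
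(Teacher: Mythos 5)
Your proof is correct and follows essentially the same route as the paper: both arguments take $\mathscr S$ to be the isometry classes of rank-one lattices with Gram matrices $(2\alpha)$ for $\alpha$ totally positive, apply Theorem \ref{finite} to extract $\mathscr S_0$ and hence $\mathscr K_0$, and use the doubling $f \mapsto 2f$ to pass from an integer-valued form to a scale-integral lattice. Your additional remarks (finiteness of $\ring^\times/\ring^{\times 2}$ to justify that $\mathscr S$ is infinite, and the explicit equivalence $2f(\bx)=2a \Leftrightarrow f(\bx)=a$) are correct and merely make explicit what the paper leaves implicit.
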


\end{document}